\numberwithin{equation}{section}
\newtheorem{theorem}[equation]{Theorem}
\newtheorem*{theorem*}{Theorem}
\newtheorem{multvNtheorem}[equation]{Multiplicator von Neumann Theorem}
\newtheorem{structuretheorem}[equation]{Structure Theorem}
\newtheorem{inversetheorem}[equation]{Inverse Theorem}
\newtheorem{lemma}[equation]{Lemma}
\newtheorem{proposition}[equation]{Proposition}
\theoremstyle{definition}
\newtheorem{definition}[equation]{Definition}
\theoremstyle{remark}
\newcommand*{\N}{\mathbb{N}}
\newcommand*{\Z}{\mathbb{Z}}
\newcommand*{\R}{\mathbb{R}}
\newcommand*{\dif}{\mathrm{d}}
\newcommand*{\inv}{^{-1}}
\newcommand*{\E}{\mathbb{E}}
\newcommand*{\from}{\colon}
\def\<{\left\langle}
\def\>{\right\rangle}
\newcommand*{\T}{\mathbb{T}}
\DeclareMathOperator{\complexity}{cplx}
\newcommand*{\cplx}{\mathbf{c}}
\newcommand*{\vd}{d}
\newcommand*{\m}{\mathrm{m}}
\newcommand{\meas}[1]{\m(#1)}
\newcommand{\aveN}{\frac{1}{N}\sum_{n=1}^N}
\newcommand{\Fo}{\Phi}
\newcommand{\ave}[2]{\E_{#1 \in #2}}
\newcommand{\AG}{\mathcal{G}} 
\newcommand{\AGg}{g} 
\newcommand{\AGh}{h} 
\newcommand{\AGl}{\mathfrak{l}} 
\newcommand{\AGr}{\mathfrak{r}} 
\newcommand{\TG}{\mathcal{T}} 
\newcommand*{\fTG}[1][]{\TG_{\bullet #1}} 
\newcommand{\TGT}{T} 
\newcommand{\TGS}{S} 
\newcommand*{\TGTvec}{\mathbf{\TGT}}
\newcommand*{\TGSvec}{\mathbf{\TGS}}
\newcommand*{\Trans}{R} 
\newcommand{\Der}{D} 
\newcommand*{\Av}[2][\TGTvec]{\mathcal{A}^{#1}_{#2}} 
\newcommand{\indx}{\alpha}
\newcommand{\indy}{\beta}
\newcommand{\iset}{A}
\newcommand{\folner}{F\o{}lner}
\DeclareDocumentCommand\reduction{ m m g g }{%
  {\<#1 | #2\>_{\IfNoValueF{#3}{#3}%
      \IfNoValueF{#4}{,#4}}%
    }%
  }
\begin{document}

\title{Norm convergence of multiple ergodic averages on amenable groups}
\author{Pavel Zorin-Kranich}
\address
{Institute of Mathematics\\
Hebrew University, Givat Ram\\
Jerusalem, 91904, Israel}
\email{pzorin@math.huji.ac.il}
\urladdr{http://math.huji.ac.il/~pzorin/}
\keywords{multiple ergodic averages, amenable group, commuting actions}
\subjclass[2010]{Primary 37A30}
\begin{abstract}
We apply Walsh's method for proving norm convergence of multiple ergodic averages to arbitrary amenable groups.
We obtain convergence in the uniform Ces\`aro sense for their polynomial actions and for ``triangular'' averages associated to commuting homomorphic actions.
The latter generalizes a result due to Bergelson, McCutcheon, and Zhang in the case of two actions.
\end{abstract}
\maketitle

\section{Introduction}
In a recent breakthrough Walsh proved the norm convergence of nilpotent polynomial ergodic averages arising in Leibman's Szemer\'edi theorem for nilpotent groups \cite{MR1650102}.
His result may be stated as follows.
\begin{theorem*}[{\cite{MR2912715}}]
Let $(X,\mu)$ be a probability space and $\TG$ be a nilpotent group of $\mu$-preserving algebra automorphisms of $L^{\infty}(X)$.\footnote{Of course, such automorphisms arise from measure-preserving transformations on $(X,\mu)$.}
Then for any $f_{1},\dots,f_{k} \in L^{\infty}(X)$ and any polynomial\footnote{Polynomials into nilpotent groups will be defined in \textsection\ref{sec:polynomial}.} maps $\TGT_{1},\dots,\TGT_{k} : \Z \to \TG$ the averages
\[
\aveN \TGT_{1}(n)f_{1} \cdots \TGT_{k}(n)f_{k}
\]
converge in $L^{2}(X)$.
\end{theorem*}
This result has a long history, the main milestones being due, in roughly chronological order, to Furstenberg \cite{MR0498471}, Conze and Lesigne \cite{MR788966}, Host and Kra \cite{MR2150389}, Ziegler \cite{MR2257397}, Leibman \cite{MR2151605}, also jointly with Bergelson \cite{MR1881925}, Tao \cite{MR2408398}, Austin \cite{MR2599882}, and Host \cite{MR2539560}.

The purpose of this article is to show how the method underlying Walsh's proof in fact yields stronger results.
Firstly, we obtain convergence in the uniform Ces\`aro sense.
This provides a conceptually satisfying explanation for the syndeticity of the set of recurrence times in Leibman's nilpotent Szemer\'edi theorem.
Secondly, we replace $\Z$ by an arbitrary locally compact amenable group $\AG$.
This is motivated by a conjecture of Bergelson, McCutcheon, and Zhang \cite{MR1481813} regarding multiple recurrence for several commuting measure-preserving $\AG$-actions.
We note that this conjecture was resolved in the positive by Austin \cite{arXiv:1309.4315} after the completion of this work.

In order to formulate the main result and to facilitate the reading of the paper we will now introduce the standing notation that will remain unchanged throughout the text.
We denote by $\AG$ a locally compact (not necessarily second countable) amenable group with a left Haar measure $\m$.
We fix a probability space $(X,\mu)$ and a group $\TG$ of unitary operators on $L^{2}(X)$ that act as isometric algebra homomorphisms on $L^{\infty}(X)$.
Of course, one could work with the corresponding measure-preserving transformations on $X$ instead, but this would introduce notation overhead since Walsh's method is operator-theoretic in nature.
All maps $\TGT_{i}:\AG\to\TG$ that we consider are assumed to be measurable.
\begin{theorem}
\label{thm:norm-convergence}
Let $f_{0},\dots,f_{k}\in L^{\infty}(X)$ and $\TGT_{1},\dots,\TGT_{k} : \AG \to \TG$.
Suppose that one of the following holds.
\begin{enumerate}
\item\label{thm:norm-convergence:polynomial}
The group $\TG$ is nilpotent and the maps $\TGT_{i}$ are polynomial or
\item\label{thm:norm-convergence:commuting}
the maps $\TGT_{i}$ have the form $\TGT_{i}=\prod_{j=1}^{i} \TGS_{j}$, where $\TGS_{j}$ are commuting antihomomorphisms.
\end{enumerate}
Then for every left \folner{} net $(\Fo_{\indx})_{\indx\in\iset}$ the limit
\[
\lim_{\indx} \meas{\Fo_{\indx}}\inv \int_{\Fo_{\indx}} f_{0} \TGT_{1}(\AGg) f_{1} \cdots \TGT_{k}(\AGg) f_{k} \dif\m(\AGg)
\]
exists in $L^{2}(X)$ and does not depend on the \folner{} net.
\end{theorem}
In order to illustrate the power, but also the limitations, of Theorem~\ref{thm:norm-convergence}\eqref{thm:norm-convergence:polynomial} we note that it provides convergence of the averages in \cite[Theorem 1.2]{arXiv:1105.5612} on the joining (and not only of their expectations on the first factor) but fails to produce the invariance.
An analog of Theorem~\ref{thm:norm-convergence}\eqref{thm:norm-convergence:polynomial} cannot hold for solvable groups of exponential growth in view of counterexamples due to Bergelson and Leibman \cite{MR2041260}.
Theorem~\ref{thm:norm-convergence}\eqref{thm:norm-convergence:commuting} generalizes \cite[Theorem 4.8]{MR1481813}, which is the $k=2$ case of it.
Different proofs of Theorem~\ref{thm:norm-convergence}\eqref{thm:norm-convergence:polynomial} and \eqref{thm:norm-convergence:commuting} for discrete groups $\AG$ were recently obtained by Austin in \cite{arxiv:1310.3219} and \cite{arXiv:1309.4315}, respectively.

Walsh's argument uses Kreisel's \emph{no-counterexample interpretation} \cite{MR0049135} of convergence.
In order to illustrate this and some other ideas involved in Walsh's technique, we begin with a proof of a quantitative version of the von Neumann mean ergodic theorem for multiplicators on the unit circle $\T$.

\section{A close look at the von Neumann mean ergodic theorem}
\label{sec:vn}
Throughout this section $\mu$ denotes a Borel measure on $\T$ and $Uf(\lambda) = \lambda f(\lambda)$ is a multiplicator on $L^2(\T,\mu)$.
The von Neumann mean ergodic theorem in its simplest form reads as follows.
\begin{multvNtheorem}
\label{thm:vn-mult}
Let $\mu$ and $U$ be as above.
Then the ergodic averages $a_{N} = \E_{n\leq N} U^n f$ converge in $L^2(\mu)$.
\end{multvNtheorem}
\begin{proof}
The averages $a_{N}$ are dominated by $|f|$ and converge pointwise, namely to $f(1)$ at $1$ and to zero elsewhere.
\end{proof}

It is well-known that no uniform bound on the rate of convergence of the ergodic averages can be given even if $U$ is similar to the Koopman operator of a measure-preserving transformation \cite{MR510630}.
However, there does exist a uniform bound on the \emph{rate of metastability} of the ergodic averages.
Let us recall the concept of \emph{metastability}.
\index{metastability}
The sequence $(a_N)$ converges if and only if it is Cauchy, i.e.
\[
\forall\epsilon>0 \, \exists M \, \forall N,N' (M \leq N,N' \implies \|a_N - a_{N'}\|_2 < \epsilon).
\]
The negation of this statement, i.e. ``$(a_N)$ is not Cauchy'' reads
\[
\exists\epsilon>0 \, \forall M \, \exists N,N' \colon M \leq N, N' ,\, \|a_N - a_{N'}\|_2 \geq \epsilon.
\]
Choosing witnesses $N(M)$, $N'(M)$ for each $M$ and defining
\[
F(M) = \max\{N(M), N'(M)\}
\]
we see that this is equivalent to
\[
\exists\epsilon>0 \, \exists F \from\N\to\N \, \forall M \, \exists N,N' \colon M \leq N,N' \leq F(M) ,\, \|a_N - a_{N'}\|_2 \geq \epsilon.
\]
Negating this we obtain that $(a_N)$ is Cauchy if and only if
\[
\forall\epsilon>0 \, \forall F \from\N\to\N \, \exists M \, \forall N,N'
(M \leq N,N' \leq F(M) \implies \|a_N - a_{N'}\|_2 < \epsilon ).
\]
This kind of condition, namely that the oscillation of a function is small on a finite interval is called \emph{metastability}.
A bound on the \emph{rate of metastability} is a bound on $M$ that may depend on $\epsilon$ and $F$ but not the sequence $(a_{N})_{N}$.

The appropriate reformulation of the von Neumann mean ergodic theorem for the operator $U$ in terms of metastability reads as follows.
\begin{multvNtheorem}[finitary version]
\label{thm:vn-fin}
Let $\mu$ and $U$ be as above. Then for every $\epsilon>0$, every function $F\from\N\to\N$ and every $f \in L^{2}(\mu)$ there exists a number $M$ such that for every $M \leq N, N' \leq F(M)$ we have
\begin{equation}
\label{eq:vn}
\Big\| \E_{n \leq N} U^n f - \E_{n \leq N'} U^n f \Big\|_2 < \epsilon.
\end{equation}
\end{multvNtheorem}
Although Theorem~\ref{thm:vn-fin} is equivalent to Theorem~\ref{thm:vn-mult} by the above considerations, we now attempt to prove it as stated.

\begin{proof}[Proof of Theorem~\ref{thm:vn-fin}]
It clearly suffices to consider strictly monotonically increasing functions $F$.
Let us assume $\|f\|_{2}=1$, take an arbitrary $M$ and see what can be said about the averages in \eqref{eq:vn}.

Suppose first that $f$ is supported near $1$, say on the disc $A_{M}$ with radius $\frac{\epsilon}{6 F(M)}$ and center $1$.
Then $U^n f$ is independent of $n$ up to a relative error of $\frac{\epsilon}{6}$ provided that $n \leq F(M)$, hence both averages are nearly equal.

Suppose now that the support of $f$ is bounded away from $1$, say $f$ is supported on the complement $B_{M}$ of the disc with radius $\frac{12}{\epsilon M}$ and center $1$.
Then the exponential sums $\frac1N \sum_{1 \leq n \leq N} \lambda^n$ are bounded by $\frac{\epsilon}{6}$ for all $\lambda$ in the support of $f$ provided that $N \geq M$, hence both averages are small.

However, there is an annulus whose intersection with the unit circle $E_M = \T \setminus ( A_M \cup B_M )$ does not fall in any of the two cases.
The key insight is that the regions $E_{M_i}$ can be made pairwise disjoint if one chooses a sufficiently rapidly growing sequence $(M_i)_i$, for instance it suffices to ensure $\frac{12}{\epsilon M_{i+1}} < \frac{\epsilon}{6 F(M_{i})}$.

Given $f$ with $\|f\|_2 \leq 1$, we can by the pigeonhole principle find an $i$ such that $\|f E_{M_i}\|_{2} < \epsilon/6$ (here we identify sets with their characteristic functions).
Thus we can split
\begin{equation}
\label{eq:decomposition-walsh}
f = \sigma + u + v,
\end{equation}
where $\sigma = f A_{M_i}$ is ``structured'', $u = f B_{M_i}$ is ``pseudorandom'', and $v = f E_{M_i}$ is $L^2$-small.
By the above considerations we obtain \eqref{eq:vn} for all $M_i \leq N, N' \leq F(M_i)$.
\end{proof}

Observe that the sequence $(M_i)_i$ in the foregoing proof does not depend on the measure $\mu$.
Moreover, a finite number of disjoint regions $E_{M_i}$ suffices to ensure that $f E_{M_i}$ is small for some $i$.
This yields the following strengthening of the von Neumann theorem.
\begin{multvNtheorem}[quantitative version]
\label{thm:vn-quan}
For every $\epsilon>0$ and every function $F\from\N\to\N$ there exist natural numbers $M_1, \dots, M_K$ such that for every $\mu$ and every $f \in L^{2}(\mu)$ with $\|f\|_{2}\leq 1$ there exists an $i$ such that for every $M_i \leq N, N' \leq F(M_i)$ we have
\[
\Big\| \E_{n \leq N} U^n f - \E_{n \leq N'} U^n f \Big\|_2 < \epsilon,
\]
where $Uf(\lambda)=\lambda f(\lambda)$ is a multiplicator as above.
\end{multvNtheorem}

The spectral theorem or the Herglotz-Bochner theorem can be used to deduce a similar result for any unitary operator.
The argument of Avigad, Gerhardy, and Towsner \cite[Theorem 2.16]{MR2550151} gives a similar result for arbitrary contractions on Hilbert spaces.
An even more precise result regarding contractions on uniformly convex spaces has been recently obtained by Avigad and Rute \cite{arxiv:1203.4124v1}.

Quantitative statements in the spirit of Theorem~\ref{thm:vn-quan} with uniform bounds that do not depend on the particular measure-preserving system allowed Walsh to use a certain induction argument that breaks down if this uniformity is disregarded.
A decomposition of the form \eqref{eq:decomposition-walsh}, albeit a much more elaborate one (Structure Theorem~\ref{thm:structure}), will also play a prominent role.

\section{\folner{} nets}
\label{sec:folner}
In this section we introduce the notation for various phenomena surrounding the \folner{} condition for amenability.
Recall that $\AG$ is a locally compact amenable group and $\m$ is a left Haar measure on $\AG$.
\begin{definition}
\label{def:folner-net}
A net $(\Fo_{\indx})_{\indx\in\iset}$ of non-null compact subsets of $\AG$ is called a \emph{\folner{} net} if
for every compact set $K \subset \AG$ one has
\[
\lim_{\indx} \sup_{\AGg\in K} \meas{\AGg\Fo_{\indx} \Delta \Fo_{\indx}}/\meas{\Fo_{\indx}} = 0.
\]
\end{definition}
It is more appropriate to call such objects \emph{left \folner{} nets}, but since we will not have to deal with the corresponding right-sided notion we omit the qualifier ``left''.
It is well-known that every amenable group admits a \folner{} net, which can be chosen to be a sequence if the group is $\sigma$-compact \cite[Theorem 4.16]{MR961261}.

By the \folner{} property for every $\gamma>0$ there exists a function $\varphi_{\gamma}\from \iset\to\iset$ such that
\begin{equation}
\label{eq:varphi}
\sup_{\AGg\in \Fo_{\indx}}\meas{\AGg\Fo_\indy \Delta \Fo_\indy} / \meas{\Fo_\indy} < \gamma
\text{ for every } \indy \geq \varphi_{\gamma}(\indx).
\end{equation}
Given a \folner{} net $(\Fo_{\indx})_{\indx\in\iset}$, we call sets of the form $\Fo_\indx \AGr$, $\AGr\in\AG$, $\indx\in\iset$, \emph{\folner{} sets}.
Such sets are usually denoted by the letter $I$.
For a \folner{} set $I$ we write $\lfloor I \rfloor = \indx$ if $I = \Fo_\indx \AGr$ for some $\AGr\in\AG$.

Note that, for any \folner{} net $(\Fo_{\indx})_{\indx}$ and any $\AGr_{\indx}\in\AG$, the net $(\Fo_{\indx}\AGr_{\indx})_{\indx\in\iset}$ is again \folner{}.
Thus we could replace the sets $\Fo_{\indx}$ in Theorem~\ref{thm:norm-convergence} by sets $\Fo_{\indx}\AGr_{\indx}$, but no additional generality would be gained by doing so.
It will nevertheless be crucial to work with estimates that are uniform over such right translates in the proof.
This is because the family of \folner{} sets is directed by \emph{approximate} (up to an arbitrarily small proportion) inclusion.
To be more precise, we say that a finite measure set $K$ is \emph{$\gamma$-approximately included} in a measurable set $I$, in symbols $K \lesssim_{\gamma} I$, if $\meas{K \setminus I}/\meas{K} < \gamma$.
\begin{lemma}
\label{lem:ceil}
For every $\gamma>0$ and any compact sets $I$ and $I'$ with positive measure
there exists an index $\lceil I, I' \rceil_{\gamma} \in\iset$ with the property that
for every $\indx \geq \lceil I, I' \rceil_{\gamma}$ there exists some $\AGr\in\AG$ such that
$I \lesssim_{\gamma} \Fo_\indx \AGr$ and $I' \lesssim_{\gamma} \Fo_\indx \AGr$.
\end{lemma}
We use the expectation notation $\E_{\AGg\in I} f(\AGg) = \meas{I}\inv \int_{\AGg\in I} f(\AGg) \dif\m(\AGg)$ for finite measure subsets $I \subset\AG$, where the integral is taken with respect to the left Haar measure $\m$.
Note that the expectation satisfies $\E_{\AGg\in \AGl I\AGr} f(\AGg) = \E_{\AGg\in I} f(\AGl\AGg \AGr)$ for any $\AGl,\AGr\in\AG$.
\begin{proof}
Let $K \subset \AG$ be compact and $c>0$ to be chosen later.
By the \folner{} property there exists an index $\indx_{0}\in \iset$ such that for every $\indx \geq \indx_{0}$ we have $\meas{\AGl\Fo_\indx \cap \Fo_\indx}/\meas{\Fo_\indx} > 1-c$ for all $\AGl\in K$.
Integrating over $K$ and using Fubini's theorem we obtain
\begin{align*}
1-c
&< \E_{\AGl\in K} \E_{\AGr\in \Fo_\indx} 1_{\Fo_\indx}(\AGl \AGr)\\
&= \E_{\AGr\in \Fo_\indx} \E_{\AGl\in K} 1_{\Fo_\indx \AGr\inv}(\AGl)\\
&= \E_{\AGr\in \Fo_\indx} \meas{K \cap \Fo_\indx \AGr\inv}/\meas{K}.
\end{align*}
Therefore there exists a $\AGr\in\AG$ (that may depend on $\indx \geq \indx_0$) such that $\meas{K \cap \Fo_\indx \AGr}/\meas{K} > 1-c$, so $\meas{K \setminus \Fo_\indx \AGr}/\meas{K} < c$.

We apply this with $K = I \cup I'$ and $c = \gamma \frac{\min\{\meas{I},\meas{I'}\}}{\meas{K}}$.
Let $\lceil I, I' \rceil_{\gamma} := \indx_{0}$ as above and $\indx \geq \lceil I, I' \rceil_{\gamma}$.
Then for an appropriate $\AGr\in\AG$ we have
\[
\meas{I \setminus \Fo_\indx \AGr}/\meas{I} \leq \meas{K \setminus \Fo_\indx \AGr}/\meas{I} < \meas{K}c/\meas{I} \leq \gamma,
\]
and analogously for $I'$.
\end{proof}
Any two \folner{} sequences are subsequences of some other \folner{} sequence, so if the Ces\`aro averages converge along every \folner{} sequence, then the limit does not depend on the \folner{} sequence.
For similar reasons this is also true for \folner{} nets; we include a proof for completeness.
\begin{lemma}
\label{lem:Clim-indep}
Suppose that $u:\AG\to V$ is a map into a Banach space such that for every \folner{} net $(\Fo_{\indx})_{\indx\in\iset}$ the limit $\lim_{\indx}\ave{\AGg}{\Fo_{\indx}}u(\AGg)$ exists.
Then the limit does not depend on the \folner{} net $\Phi$.
\end{lemma}
\begin{proof}
Let $(\Fo^{0}_{\indx})_{\indx\in\iset}$ and $(\Fo^{1}_{\indx'})_{\indx'\in\iset'}$ be \folner{} nets.
Replacing the index sets by $\iset\times\iset'$ with the product order if necessary we may assume $\iset=\iset'$.
If $\iset$ has a maximal element $\indy$, then by asymptotic invariance it follows that $\Fo_{\indy}=\Fo'_{\indy}=\AG$, and we are done.
Otherwise by \cite{MR0237386} there exists a partition $\iset = \iset^{0} \cup \iset^{1}$ into cofinal subsets, that is, subsets that contain a successor for any given element of $\iset$.
The net given by $\Fo_{\indx}:=\Fo^{i}_{\indx}$ if $\indx\in\iset^{i}$ is a \folner{} net and from cofinality it follows that
\[
\lim_{\indx}\ave{\AGg}{\Fo^{0}_{\indx}}u(\AGg)
= \lim_{\indx}\ave{\AGg}{\Fo_{\indx}}u(\AGg)
= \lim_{\indx}\ave{\AGg}{\Fo^{1}_{\indx}}u(\AGg).
\qedhere
\]
\end{proof}

\section{Complexity}
\label{sec:complexity}
In this section we give a streamlined treatment of Walsh's notion of \emph{complexity} \cite[\textsection 4]{MR2912715}.
It serves as the induction parameter in the proof of Theorem~\ref{thm:norm-convergence}.

We call an ordered tuple $\TGTvec = (\TGT_{0},\dots, \TGT_{j})$ of measurable mappings from $\AG$ to $\TG$ in which $\TGT_{0} \equiv 1_{\TG}$ a \emph{system} (it is not strictly necessary to include the constant mapping $\TGT_{0}$ in the definition, but it comes in handy in inductive arguments).

The complexity of the trivial system $\TGTvec = (1_{\TG})$ is by definition at most zero, in symbols $\complexity\TGTvec \leq 0$.
A system has finite complexity if it can be reduced to the trivial system in finitely many steps by means of two operations, \emph{reduction} (used in Proposition~\ref{prop:metastability}) and \emph{cheating} (used in Theorem~\ref{thm:metastability}).

Recall that the discrete derivative of a map $\TGT:\AG\to \TG$ is defined by
\[
\Der_{\AGr}\TGT(\AGg) = \TGT\inv(\AGg) \Trans_{\AGr}\TGT(\AGg),
\text{ where }
\Trans_{\AGr}\TGT(\AGg) = \TGT(\AGg\AGr).
\]
For $\AGr \in \AG$ the \emph{$\AGr$-reduction} of mappings $\TGT,\TGS \from\AG\to \TG$ is the mapping
\index{reduction}
\[
\reduction{\TGT}{\TGS}{\AGr}(\AGg)
= \Der_{\AGr}(\TGT\inv)(\AGg) \Trans_{\AGr}\TGS(\AGg)
= \TGT(\AGg)\TGT(\AGg\AGr)\inv \TGS(\AGg\AGr)
\]
and the $\AGr$-reduction of a system $\TGTvec = (\TGT_{0},\dots, \TGT_{j})$ is the system
\[
\TGTvec_{\AGr}^{*}
:=
\TGTvec' \uplus \reduction{\TGT_{j}}{\TGTvec'}{\AGr}
=
\left( \TGT_{0},\dots, \TGT_{j-1}, \reduction{\TGT_{j}}{\TGT_{0}}{\AGr},\dots,\reduction{\TGT_{j}}{\TGT_{j-1}}{\AGr} \right),
\]
where we use the shorthand notation $\TGTvec' = (\TGT_{0},\dots,\TGT_{j-1})$ and $\reduction{\TGT_{j}}{(\TGT_{0},\dots,\TGT_{j-1})} = (\reduction{\TGT_{j}}{\TGT_{0}},\dots,\reduction{\TGT_{j}}{\TGT_{j-1}})$, and where the symbol ``$\uplus$'' denotes concatenation.
If the reduction $\TGTvec_{\AGr}^{*}$ has complexity at most $\cplx-1$ for every $\AGr \in \AG$, then the system $\TGTvec$ is defined to have complexity at most $\cplx$.

Furthermore, if $\TGTvec$ is a system of complexity at most $\cplx$ and the system $\tilde\TGTvec$ consists of functions of the form $\TGT c$, where $\TGT\in\TGTvec$ and $c\in \TG$, then we cheat and set $\complexity\tilde\TGTvec \leq \cplx$.
This definition tells that striking out constants and multiple occurrences of the same mapping in a system as well as rearranging mappings will not change the complexity, and adding new mappings can only increase the complexity, for example
\[
\complexity(1_{\TG},\TGT_{2},\TGT_{1}c,\TGT_{1},c')
= \complexity(1_{\TG},\TGT_{1},\TGT_{2})
\leq  \complexity(1_{\TG},\TGT_{1},\TGT_{2},\TGT_{3}).
\]
Note that cheating is transitive in the sense that if one can go from system $\TGTvec$ to system $\tilde\TGTvec$ in finitely many cheating steps, then one can also go from $\TGTvec$ to $\tilde\TGTvec$ in one cheating step.

A generic system certainly does not have finite complexity.
We will describe two classes of systems that do have finite complexity, leading to the two cases in Theorem~\ref{thm:norm-convergence}.

\subsection{Commuting actions}
We begin with the simpler class of systems arising from commuting actions.
Recall that a map $\TGS\from\AG\to\TG$ is called an \emph{antihomomorphism} if $\TGS(\AGg\AGh)=\TGS(\AGh)\TGS(\AGg)$ for every $\AGg,\AGh\in\AG$.
Antihomomorphisms $\AG\to\TG$ correspond to measure-preserving actions of $\AG$ on $(X,\mu)$.
Two antihomomorphisms $\TGS_{i},\TGS_{j}\from\AG\to\TG$ are said to \emph{commute} if
\[
\TGS_{i}(\AGh)\TGS_{j}(\AGg) = \TGS_{j}(\AGg)\TGS_{i}(\AGh)
\quad\text{for any}\quad
\AGh,\AGg\in\AG.
\]
\begin{proposition}
\label{prop:comm-antihom}
Let $\TGS_{0}\equiv 1_{\TG}$ and $\TGS_{1},\dots,\TGS_{k} \from\AG\to \TG$ be antihomomorphisms that commute pairwise.
Then the system $(\TGS_{0},\TGS_{0}\TGS_{1},\dots,\TGS_{0}\dots \TGS_{k})$ has complexity at most $k$.
\end{proposition}
\begin{proof}
Every antihomomorphism $\TGS_{i}\from\AG\to \TG$ satisfies
\[
\Der_{\AGr}(\TGS_{i}\inv)(\AGg) = \TGS_{i}(\AGg) \TGS_{i}(\AGg\AGr)\inv = \TGS_{i}(\AGg) (\TGS_{i}(\AGr)\TGS_{i}(\AGg))\inv = \TGS_{i}(\AGr)\inv
\]
and
\[
\Trans_{\AGr}\TGS_{i}(\AGg) = \TGS_{i}(\AGg\AGr) = \TGS_{i}(\AGr)\TGS_{i}(\AGg).
\]
Thus for every $i<k$ we have
\begin{align*}
\reduction{\TGS_{0}\dots \TGS_{k}}{\TGS_{0}\dots \TGS_{i}}{\AGr}
&=
\Der_{\AGr} ((\TGS_{0}\dots \TGS_{k})\inv) \Trans_{\AGr}(\TGS_{0}\dots \TGS_{i})\\
&=
(\TGS_{0}(\AGr)\dots \TGS_{k}(\AGr))\inv \TGS_{0}(\AGr)\TGS_{0}\dots \TGS_{i}(\AGr)\TGS_{i}\\
&=
\TGS_{0}\dots \TGS_{i} \TGS_{i+1}(\AGr)\inv \dots \TGS_{k}(\AGr)\inv.
\end{align*}
Since $\TGS_{i+1}(\AGr)\inv \dots \TGS_{k}(\AGr)\inv \in \TG$ is a constant, we obtain
\[
\complexity (\TGS_{0},\TGS_{0}\TGS_{1},\dots,\TGS_{0}\dots \TGS_{k})^{*}_{\AGr}
=
\complexity (\TGS_{0},\TGS_{0}\TGS_{1},\dots,\TGS_{1}\dots \TGS_{k-1})
\]
by cheating.
We can conclude by induction on $k$.
\end{proof}

\subsection{Polynomial mappings}
\label{sec:polynomial}
Polynomials on $\Z$ of degree $\leq d$ can be characterized as those maps all of whose $(d+1)$-th discrete derivatives vanish identically.
A similar definition can be made for maps on any groups, but it has a serious disadvantage: unlike in the commutative case, a product of two polynomials of degree $\leq d$ may have higher degree $>d$, which causes difficulties in various inductive arguments.
This flaw has been rectified for polynomial mappings into nilpotent groups by Leibman who introduced the notion of vector degree and showed that polynomial mappings of a given vector degree form a group under pointwise operations \cite[Proposition 3.7 and erratum]{MR1910931}.
We find it more convenient to phrase his result in terms of filtrations.
A \emph{prefiltration} $\fTG$ is a sequence of subgroups
\begin{equation}
\label{eq:filtration}
\TG_{0} \geq \TG_{1} \geq \TG_{2} \geq \dots
\quad\text{such that}\quad
[\TG_{i},\TG_{j}]\subset \TG_{i+j}
\quad\text{for all } i,j\in\N=\{0,1,\dots\}.
\end{equation}
A \emph{filtration} (on a group $\TG$) is a prefiltration in which $\TG_{0}=\TG_{1}$ (and $\TG_{0}=\TG$).
A prefiltration is said to have \emph{length} $d\in\N$ if $\TG_{d+1}$ is the trivial group and length $-\infty$ if $\TG_{0}$ is the trivial group.
In this article we only consider prefiltrations for which one of these alternatives holds.
If $\TG$ is a nilpotent group, then the lower central series is a filtration.
For a prefiltration $\fTG$ of length $d$ we define $\fTG[+t]$ to be the prefiltration of length $d-t$ ($=-\infty$ if $d<t$) on $\TG$ given by $(\fTG[+t])_{i}=\TG_{i+t}$.

We define $\fTG$-polynomial maps by induction on the length of the prefiltration.
\begin{definition}
\label{def:polynomial}
Let $\fTG$ be a prefiltration of length $d\in\{-\infty\}\cup\N$.
A map $\TGT\from\AG\to \TG_{0}$ is called \emph{$\fTG$-polynomial} if either $d=-\infty$ (so that $\TGT\equiv 1_{\TG}$) or for every $\AGr\in\AG$ the discrete derivative $\Der_{\AGr}\TGT$ is $\fTG[+1]$-polynomial.
\end{definition}
Heuristically, this means that a map $\TGT:\AG\to \TG$ is polynomial if and only if every discrete derivative is polynomial ``of lower degree'' (although it usually does not make sense to define a ``degree'' for polynomials into nilpotent groups since it is necessary to keep track of the prefiltration $\fTG$ anyway).
Note that any map $\TGT\from\AG\to \TG$ into a nilpotent group that is polynomial of scalar degree $\leq d$ in the sense that
\[
\Der_{\AGr_{1}}\dots \Der_{\AGr_{d+1}} \TGT \equiv 1_{\TG}
\text{ for any }
\AGr_{1},\dots,\AGr_{d+1}\in\AG
\]
is also polynomial in the sense of Definition~\ref{def:polynomial}.
Indeed, if $\fTG$ is the lower central series, then
\begin{equation}
\label{eq:scalar-poly-filtration}
\TG_{0} \geq
\underbrace{\TG_{1} \geq \dots \geq \TG_{1}}_{d \text{ times}} \geq \dots \geq
\underbrace{\TG_{s} \geq \dots \geq \TG_{s}}_{d \text{ times}} \geq \TG_{s+1}
\end{equation}
is again a filtration, and $\TGT$ is polynomial with respect to it.
This is because the $(d+1)$-th derivative of $\TGT$ vanishes identically and in particular takes values in $\TG_{2}$, although one cannot a priori say anything about the $d$-th derivative.
On the other hand any mapping that is polynomial with respect to some filtration has scalar degree bounded by the length of the filtration.
This shows in particular that for any finite set of polynomial maps into a nilpotent group there is a filtration with respect to which all of them are polynomial.

The next theorem is a version of Leibman's result that polynomial mappings form a group \cite[Theorem 3.4]{MR1910931}.
A short proof can be found in \cite{arxiv:1206.0287}.
\begin{theorem}
\label{thm:poly-group}
The set of $\fTG$-polynomial maps $\AG\to \TG$ is a group under pointwise operations and is shift-invariant in the sense that for every $\fTG$-polynomial $\TGT$ and $\AGr\in\AG$ the translate $\Trans_{\AGr}g$ is also $\fTG$-polynomial.
Moreover, for any $\fTG[+t_{i}]$-polynomial maps $\TGT_{i} \from \AG \to \TG$, $i=0,1$, the commutator $[\TGT_{0},\TGT_{1}]$ is $\fTG[+t_{0}+t_{1}]$-polynomial.
\end{theorem}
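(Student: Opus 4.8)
The plan is to prove the three assertions together by induction on the length $d$ of the prefiltration $\Gb$; the case $d\le 0$ is trivial, since there a $\Gb$-polynomial map is constant. The induction rests on three elementary identities for the discrete derivative, each verified by a one-line pointwise computation: the \emph{chain rule} $D_{a,b}(g_{0}g_{1})=(D_{a,b}g_{0})^{g_{1}}\cdot(D_{a,b}g_{1})$, where $x^{y}:=y\inv xy$ is understood pointwise; the \emph{translation rule} $D_{a,b}(T_{c,e}g)=T_{c,e}(D_{cac\inv,\,e\inv be}\,g)$ (here $T_{c,e}g(n)=g(cne)$); and the commutator expansions $[xy,z]=[x,z]^{y}[y,z]$ and $[x,yz]=[x,z][x,y]^{z}$, applied pointwise, together with the fact that each $T_{a,b}$ is an endomorphism of the group of all maps $\Gamma\to G_{0}$, so $T_{a,b}[g_{0},g_{1}]=[T_{a,b}g_{0},T_{a,b}g_{1}]$. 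I would also record, by a separate easy induction on length, that every $\Gb[+s+1]$-polynomial map is $\Gb[+s]$-polynomial; this lets one absorb higher-order terms.

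In the inductive step, shift-invariance comes first and for free: by the translation rule $D_{a,b}(T_{c,e}g)$ is a translate of the $\Gb[+1]$-polynomial map $D_{cac\inv,e\inv be}g$, hence $\Gb[+1]$-polynomial by the inductive hypothesis for $\Gb[+1]$, so $T_{c,e}g$ is $\Gb$-polynomial. The commutator statement I would prove by a secondary, downward induction on $t_{0}+t_{1}$, from $d$ down to $0$. The base case $t_{0}+t_{1}=d$ is a direct computation: writing $g_{i}(n)=g_{i}(1)\cdot u_{i}(n)$ with $u_{i}(n):=D_{n,1}g_{i}(1)\in G_{t_{i}+1}$ and using $[G_{t_{0}+1},G_{t_{1}}]=[G_{t_{0}},G_{t_{1}+1}]=[G_{d},G_{1}]=1$, one finds $[g_{0},g_{1}]$ is the constant map with value $[g_{0}(1),g_{1}(1)]\in G_{d}$, which is $\Gb[+d]$-polynomial. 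For the downward step one expands
\[
D_{a,b}[g_{0},g_{1}]=[g_{0},g_{1}]\inv\,[g_{0}h_{0},\,g_{1}h_{1}],\qquad h_{i}:=D_{a,b}g_{i},
\]
(with $h_{i}$ being $\Gb[+t_{i}+1]$-polynomial) by means of the commutator identities. Up to conjugations by products of the $h_{i}$ — which lie in the group of $\Gb[+1]$-polynomial maps, by the inductive hypothesis for $\Gb[+1]$ — the expansion is a product of $[g_{0},h_{1}]$, $[h_{0},g_{1}]$ and $[h_{0},h_{1}]$, each of total shift strictly larger than $t_{0}+t_{1}$ and so $\Gb[+t_{0}+t_{1}+1]$-polynomial by the downward inductive hypothesis, together with one term built from $[g_{0},g_{1}]$ itself; the conjugations are harmless because $\Gb[+t_{0}+t_{1}+1]$-polynomial maps form a group, a strictly shorter-length instance of the theorem. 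Finally, once the commutator statement is available at total shift $1$, the group property at level $d$ follows from the chain rule: $(D_{a,b}g_{0})^{g_{1}}=(D_{a,b}g_{0})\cdot[D_{a,b}g_{0},g_{1}]$ is a product of $\Gb[+1]$-polynomial maps, and likewise for inverses, so $g_{0}g_{1}$ and $g_{0}\inv$ are $\Gb$-polynomial; the remaining case $t_{0}=t_{1}=0$ of the commutator statement is then immediate.

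The one genuinely delicate point — the step I expect to be the main obstacle — is the self-referential term $[[g_{0},g_{1}],h_{1}h_{0}]$ (and the conjugate of $[g_{0},h_{1}]^{h_{0}}$ by $[g_{0},g_{1}]$) appearing in the expansion of $D_{a,b}[g_{0},g_{1}]$: controlling it naively would already require knowing that $[g_{0},g_{1}]$ is $\Gb[+t_{0}+t_{1}]$-polynomial. The resolution is to rewrite these contributions by the Hall--Witt identity (used, if necessary, modulo commutators of strictly greater weight), which replaces the inner occurrence of $[g_{0},g_{1}]$ by commutators such as $[[g_{0},h_{1}h_{0}],g_{1}]$ and $[[g_{1},h_{1}h_{0}],g_{0}]$ in which one entry lies in the $\Gb[+1]$-polynomial maps, so that the total shift is again strictly larger than $t_{0}+t_{1}$ and the downward induction closes. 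Making this bookkeeping of shifts precise in full generality is exactly what is carried out in \cite{arxiv:1206.0287}; in the classical setting it is Leibman's argument \cite{MR1910931}.
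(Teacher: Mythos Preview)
The paper does not actually prove this theorem: it simply cites \cite{arxiv:1206.0287} (the author's companion paper) and records the one-line observation that shift-invariance follows from the identity $Tg = g\cdot D_{T}g$ once the group property is known. So there is nothing to compare your argument to beyond that remark. Your sketch is vastly more detailed, follows the Leibman-style route the paper alludes to, and ultimately defers to the same reference for the precise bookkeeping. In that sense you and the paper agree.

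That said, your description of the Hall--Witt step contains a genuine slip. After applying Hall--Witt to $[[g_{0},g_{1}],h]$ with $h$ a $\Gb[+1]$-polynomial, the resulting pieces (up to conjugates and higher-weight debris) are of the form $[[g_{0},h],g_{1}]$ and $[[g_{1},h],g_{0}]$. You assert that ``the total shift is again strictly larger than $t_{0}+t_{1}$'', but this is only the \emph{outer} weight $t_{0}+1+t_{1}$; to invoke your downward induction you would first need to know that the \emph{inner} commutator, say $[g_{0},h]$, is $\Gb[+t_{0}+1]$-polynomial, and that requires the commutator statement at total shift $t_{0}+1$, which exceeds $t_{0}+t_{1}$ only when $t_{1}=0$. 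Symmetrically, $[g_{1},h]$ needs $t_{0}=0$. So the downward induction on $t_{0}+t_{1}$ as you have set it up does not close when both $t_{i}\geq 1$; one must either iterate the commutator manipulations further, split $h=h_{0}h_{1}$ and exploit that $h_{i}$ is $\Gb[+t_{i}+1]$-polynomial (not merely $\Gb[+1]$), or reorganize the whole induction (e.g.\ proving directly that the sequence of groups of $\Gb[+t]$-polynomials is itself a prefiltration, by induction on $d$ alone). This is precisely the delicate bookkeeping you flag as being carried out in \cite{arxiv:1206.0287}; your diagnosis of \emph{where} the difficulty lies is correct, but your one-sentence resolution does not yet work as written.

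A minor point: the paper's route to shift-invariance via $T_{a,b}g = g\cdot D_{a,b}g$ is cleaner than your translation-rule argument, since it reduces shift-invariance to the group property rather than requiring a separate verification.
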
 
If $\AG=\Z^{r}$ or $\AG=\R^{r}$, then examples of polynomial mappings are readily obtained considering $\TGT(n)=T_{1}^{p_{1}(n)}\cdot\dots\cdot T_{l}^{p_{l}(n)}$, where $p_{i}\from\Z^{r}\to\Z$ (resp. $\R^{r}\to\R$) are conventional polynomials and $T_{i} : \Z \to \TG$ (resp. $\R\to \TG$) are one-parameter subgroups.
It is also known that group homomorphisms between any not necessarily commutative group and a nilpotent group are polynomial, see \cite{arxiv:1206.0287}.

We say that a system $(\TGT_{0},\dots,\TGT_{j})$ is $\fTG$-polynomial for a prefiltration $\fTG$ if every map $\TGT_{i}$ is $\fTG$-polynomial.
We now record a streamlined proof of Walsh's result that that every polynomial system has finite complexity.
For brevity we will denote discrete derivatives by
\[
\Der_{\AGr}\TGT(\AGg) := \TGT\inv(\AGg) \Trans_{\AGr}\TGT(\AGg),
\text{ where }
\Trans_{\AGr}\TGT(\AGg) = \TGT(\AGg\AGr).
\]
Note that for every $\fTG$-polynomial $\TGT$ and $\AGr\in\AG$ the translate $\Trans_{\AGr}\TGT$ is also $\fTG$-polynomial (since $\Trans \TGT=\TGT \Der_{\Trans}\TGT$).
We will omit the index $\AGr$ in statements that hold for all $\AGr\in\AG$.
\begin{theorem}
\label{thm:finite-complexity}
The complexity of every $\fTG$-polynomial system $\tilde\TGTvec = (\TGT_{0},\dots,\TGT_{j})$ is bounded by a constant $\cplx(\vd,j)$ that only depends on the length $\vd$ of the prefiltration $\fTG$ and the size $j$ of the system.
\end{theorem}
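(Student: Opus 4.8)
The plan is to argue by induction, with a carefully chosen induction parameter that combines the length $\vd$ of the prefiltration and the size $j$ of the system, ordered lexicographically or by some suitable weight. First I would dispose of the base cases: if $\vd=-\infty$ then every $g_i\equiv 1_G$, so the system is (after cheating away the repeated constants) the trivial system and has complexity at most $0$; and if $j=0$ the system is $(1_G)$, again trivial. For the inductive step, consider a $\Gb$-polynomial system $\tilde\bfg=(g_0,\dots,g_j)$ with $\vd$ finite and $j\geq 1$. The key move is to examine the $(a,b)$-reduction $\bfg^{*}_{a,b}=(g_0,\dots,g_{j-1},\<g_j|g_0\>_{a,b},\dots,\<g_j|g_{j-1}\>_{a,b})$ and show that it is again a polynomial system, but one to which the induction hypothesis applies with a strictly smaller parameter; then $\complexity\tilde\bfg\leq \cplx(\vd,j)$ follows by setting $\cplx(\vd,j)=1+(\text{the bound for the reduced system})$, uniformly in $a,b$.

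The heart of the argument is therefore to understand the reduced functions $\<g_j|g_i\>_{a,b}(n)=g_j(n)g_j(anb)\inv g_i(anb)$ from the point of view of Theorem~\ref{thm:poly-group}. Write $\<g_j|g_i\>_{a,b}=D_{a,b}(g_j\inv)\cdot T_{a,b}g_i$. By Theorem~\ref{thm:poly-group} the translate $T_{a,b}g_i$ is $\Gb$-polynomial, and $D_{a,b}(g_j\inv)$ is $\Gb[+1]$-polynomial (since $g_j\inv$ is $\Gb$-polynomial and taking a single discrete derivative drops us into the shifted prefiltration, by Definition~\ref{def:polynomial} together with the group property). So each reduced function is a product of a $\Gb$-polynomial map and a $\Gb[+1]$-polynomial map, hence $\Gb$-polynomial; thus the reduced system is genuinely a $\Gb$-polynomial system. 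But merely being $\Gb$-polynomial of the same length does not decrease the induction parameter — the subtlety, exactly as in Walsh, is that the functions $\<g_j|g_i\>_{a,b}$ for $i=0,\dots,j-1$ all share the common ``left factor'' structure coming from $g_j$, and modulo $G_1$ (or more precisely modulo an appropriate term of the prefiltration) they become controlled. I would make this precise by observing that $\<g_j|g_i\>_{a,b}\cdot\<g_j|g_{i'}\>_{a,b}\inv$, or some similar combination, lands in a deeper subgroup of the prefiltration; this is where one extracts the gain.

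Concretely, I expect the clean way to organize the induction is to introduce an auxiliary quantity — following Walsh, one groups the mappings of a system according to which coset of (a term of) the prefiltration their successive ``differences'' lie in, and runs a secondary induction on how many distinct such cosets appear, which is bounded by $j$. The primary induction is on $\vd$: after finitely many reduction steps at level $\vd$ one reduces the problem to polynomial systems with respect to $\Gb[+1]$, which has length $\vd-1$, and there the primary induction hypothesis supplies a bound. So the recursion is: $\cplx(\vd,j)\leq j\cdot\big(1+\cplx(\vd-1,j')\big)$ for a suitable $j'$ bounded in terms of $j$ and the structure of the prefiltration (the number of repetitions in refinements like \eqref{eq:scalar-poly-filtration}), and this solves to a finite constant depending only on $\vd$ and $j$.

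\textbf{Main obstacle.} The genuinely delicate point is \emph{quantifying the gain} from a single reduction step so that the induction closes with a bound independent of the group $G$, the mappings $g_i$, and the shifts $a,b$. One must show that the reduced system $\bfg^{*}_{a,b}$, although still of prefiltration-length $\vd$, has effectively ``smaller size'' in the sense that its nontrivial content lives in $\Gb[+1]$ up to finitely many ambient-level obstructions; making the bookkeeping of cosets precise — and checking it really is uniform in $a,b$, which is exactly the shift-invariance supplied by Theorem~\ref{thm:poly-group} — is the part that requires care. Everything else (the base cases, the closure of polynomial systems under reduction, transitivity of cheating) is routine given the algebraic preliminaries already established.
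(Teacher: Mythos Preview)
Your proposal is correct and follows essentially the same route as the paper: the paper's Proposition~\ref{prop:finite-complexity} is exactly the ``auxiliary quantity'' you anticipate, formalizing your coset-grouping idea by considering systems of the shape $g_{0}\bfh_{0}\uplus\dots\uplus g_{j}\bfh_{j}$ with each $\bfh_{i}$ a $\Gb[+1]$-polynomial system, and the key computations \eqref{eq:red-gjgj}--\eqref{eq:red-gjgi} show that reduction keeps the number of blocks fixed while doubling $|\bfh_{0}|,\dots,|\bfh_{j-1}|$ and performing a reduction on $\bfh_{j}$. The only refinement relative to your sketch is that the secondary induction is on the pair $(j,\cplx_{j})$ rather than just the number of cosets, where $\cplx_{j}$ is the complexity of $\bfh_{j}$ as a $\Gb[+1]$-system (supplied by the primary induction hypothesis on $\vd$); this is what makes the recursion close and is precisely the ``quantifying the gain'' step you flagged as the main obstacle.
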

The proof is by induction on $d$.
For induction purposes we need the formally stronger statement below.
We use the convenient shorthand notation $\TGT(\TGS_{0},\dots,\TGS_{k})=(\TGT \TGS_{0},\dots,\TGT \TGS_{k})$.

\begin{proposition}
\label{prop:finite-complexity}
Let $\tilde\TGTvec = (\TGT_{0},\dots,\TGT_{j})$ be a $\fTG$-polynomial system.
Let also $\TGSvec_{0},\dots,\TGSvec_{j}$ be $\fTG[+1]$-polynomial systems and assume $\complexity\TGSvec_{j}\leq\cplx_{j}$.
Then the complexity of the system $\TGTvec = \TGT_{0}\TGSvec_{0} \uplus\dots\uplus \TGT_{j}\TGSvec_{j}$ is bounded by a constant $\cplx'=\cplx'(\vd,j,|\TGSvec_{0}|,\dots,|\TGSvec_{j-1}|,\cplx_{j})$, where $\vd$ is the length of $\fTG$.
\end{proposition}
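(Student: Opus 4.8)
The plan is to prove the proposition by a lexicographic induction on the triple $(\vd,j,\cplx_{j})$, with the length $\vd$ of the prefiltration $\Gb$ the most significant entry, and to \emph{define} the constant $\cplx'$ recursively as the induction proceeds. Two base cases are immediate. If $\vd\le 0$, every $\Gb$- and every $\Gb[+1]$-polynomial map is constant, so $\bfg$ is a system of constant maps and $\complexity\bfg\le 0$; here we put $\cplx'=0$. If $j=0$, then $\bfg=g_{0}\bfh_{0}=\bfh_{0}=\bfh_{j}$ since $g_{0}\equiv 1_{G}$, so it suffices to set $\cplx'(\vd,0,\cplx_{0})=\cplx_{0}$. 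For the inductive step, with $\vd\ge 1$ and $j\ge 1$, I would fix an arbitrary pair $a,b\in\Gamma$, analyse the reduction $\bfg^{*}_{a,b}$, and bound its complexity by $\cplx'-1$ uniformly in $(a,b)$; by the definition of complexity this yields $\complexity\bfg\le\cplx'$.

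Write $\bfh_{j}=(1_{G},h_{1},\dots,h_{k})$, so that the last entry of $\bfg$ is $g_{j}h_{k}$ and $\bfg^{*}_{a,b}=\bfg'\uplus\<g_{j}h_{k}|\bfg'\>_{a,b}$, where $\bfg'$ is $\bfg$ with that entry deleted. Expanding
\[
\<g_{j}h_{k}|g_{i}h\>_{a,b}(n)=g_{j}(n)h_{k}(n)\,h_{k}(anb)\inv g_{j}(anb)\inv g_{i}(anb)h(anb)
\]
and repeatedly using the identities $T_{a,b}g=g\,D_{a,b}g$ and $u(anb)\inv v(anb)=T_{a,b}(u\inv v)(n)$ together with the closure properties of polynomial maps from Theorem~\ref{thm:poly-group} (products, inverses, translates and discrete derivatives of $\Gb$-polynomial maps behave as expected, and conjugation by a $\Gb$-polynomial map preserves $\Gb[+1]$-polynomiality, via the commutator statement), one verifies that each such entry equals $g_{i}$ times a single $\Gb[+1]$-polynomial map, and that the entries with $g_{i}=g_{j}$ reassemble exactly into $g_{j}\cdot(\bfh_{j})^{*}_{a,b}$. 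Grouping the entries by their leading factor $g_{i}$ — a rearrangement, hence free under cheating — we obtain, up to cheating,
\[
\bfg^{*}_{a,b}=g_{0}\bfh_{0}^{(1)}\uplus\dots\uplus g_{j}\bfh_{j}^{(1)},
\]
where each $\bfh_{i}^{(1)}$ is a $\Gb[+1]$-polynomial system with $|\bfh_{i}^{(1)}|=2|\bfh_{i}|$ for $i<j$, while $\bfh_{j}^{(1)}=(\bfh_{j})^{*}_{a,b}$, so that $\complexity\bfh_{j}^{(1)}\le\cplx_{j}-1$.

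If $\cplx_{j}\ge 1$ — and we may assume $|\bfh_{j}|\ge 2$, since otherwise $\bfh_{j}=(1_{G})$ and the case below applies with $\cplx_{j}$ lowered to $0$, the function $\cplx'$ being nondecreasing in its last argument by construction — then the displayed system has exactly the form of the proposition, with the same $\vd$ and $j$ but $\cplx_{j}$ decreased by one, so the inductive hypothesis gives $\complexity\bfg^{*}_{a,b}\le\cplx'(\vd,j,2|\bfh_{0}|,\dots,2|\bfh_{j-1}|,\cplx_{j}-1)$; we then define $\cplx'(\vd,j,|\bfh_{0}|,\dots,|\bfh_{j-1}|,\cplx_{j})$ to be one more than this. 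If $\cplx_{j}=0$, then $\complexity\bfh_{j}\le 0$, so by cheating $\bfh_{j}=(1_{G})$ and its single entry $g_{j}$ is entirely consumed by the reduction, so the displayed system retains only the blocks $g_{0},\dots,g_{j-1}$. Its last block $\bfh_{j-1}^{(1)}$ is a $\Gb[+1]$-polynomial system, and since $\Gb[+1]$ has length $\vd-1$ — a drop in the most significant induction parameter — the proposition for $\Gb[+1]$, applied with trivial subsidiary blocks, bounds $\complexity\bfh_{j-1}^{(1)}$ by a constant depending only on $\vd-1$ and $|\bfh_{j-1}|$; feeding this bound into the proposition for $\vd$ and $j-1$ (a smaller $j$) bounds $\complexity\bfg^{*}_{a,b}$, and we define $\cplx'(\vd,j,|\bfh_{0}|,\dots,|\bfh_{j-1}|,0)$ to be one more than that. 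In every case the resulting bound on $\complexity\bfg^{*}_{a,b}$ is independent of $(a,b)$, as required. Finally, Theorem~\ref{thm:finite-complexity} is the special case $\bfh_{0}=\dots=\bfh_{j}=(1_{G})$ (so that $\bfg=\tilde\bfg$), with $\cplx(\vd,j)=\cplx'(\vd,j,1,\dots,1,0)$.

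The genuinely delicate point is the algebraic bookkeeping of the second paragraph: verifying that reduction leaves the leading skeleton $(g_{0},\dots,g_{j})$ untouched and merely replaces each subsidiary block $\bfh_{i}$ by a $\Gb[+1]$-polynomial system — the last of which is the honest reduction $(\bfh_{j})^{*}_{a,b}$, whose complexity has therefore strictly dropped. Everything else is routine: one uses only the two elementary properties of complexity from Section~\ref{sec:complexity} — that a reduction lowers complexity by one, and that cheating (rearranging entries, deleting or adjoining constant maps, replacing $g$ by $gc$) leaves complexity unchanged — and the recursively defined $\cplx'$ is finite because every clause above strictly decreases $(\vd,j,\cplx_{j})$ in the lexicographic order.
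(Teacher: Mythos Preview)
Your proof is correct and follows essentially the same approach as the paper: the same lexicographic induction on $(\vd,j,\cplx_{j})$, the same algebraic identities showing that $\<g_{j}h|g_{i}h'\>_{a,b}=g_{i}\tilde h$ with $\tilde h$ a $\Gb[+1]$-polynomial (and $=g_{j}\<h|h'\>_{a,b}$ when $i=j$), and the same recursive definition of $\cplx'$. Your invocation of ``the proposition for $\Gb[+1]$ with trivial subsidiary blocks'' in the $\cplx_{j}=0$ case is exactly the paper's appeal to Theorem~\ref{thm:finite-complexity} at length $\vd-1$.
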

The induction scheme is as follows.
Theorem~\ref{thm:finite-complexity} with length $\vd-1$ is used to prove Proposition~\ref{prop:finite-complexity} with length $\vd$, that in turn immediately implies Theorem~\ref{thm:finite-complexity} with length $\vd$.
The base case, namely Theorem~\ref{thm:finite-complexity} with $\vd=-\infty$, is trivial and $\cplx(-\infty,j)=0$.
\begin{proof}[Proof of Prop.~\ref{prop:finite-complexity} assuming Thm.~\ref{thm:finite-complexity} for length $\vd-1$]
It suffices to obtain a uniform bound on the complexity of $\TGTvec^{*}$ for every reduction $\TGTvec^{*}=\TGTvec^{*}_{\AGr}$, possibly cheating first.
Splitting $\TGSvec_{j} = \TGSvec_{j}' \uplus (\TGS)$ (where $\TGSvec_{j}'$ might be empty) we obtain
\begin{equation}
\label{eq:g-star}
\TGTvec^{*}
=
\TGT_{0}\TGSvec_{0} \uplus\dots\uplus \TGT_{j-1}\TGSvec_{j-1} \uplus \TGT_{j}\TGSvec_{j}'
\uplus
\reduction{\TGT_{j} \TGS}{\TGT_{0}\TGSvec_{0} \uplus\dots\uplus \TGT_{j-1}\TGSvec_{j-1} \uplus \TGT_{j}\TGSvec_{j}'}.
\end{equation}
Note that for every $\fTG[+1]$-polynomial $\TGS'$ we have
\begin{equation}
\label{eq:red-gjgj}
\reduction{\TGT_{j}\TGS}{\TGT_{j}\TGS'}
=
\TGT_{j}\TGS (\Trans \TGT_{j} \Trans \TGS)\inv \Trans \TGT_{j} \Trans \TGS'
=
\TGT_{j} \TGS (\Trans \TGS)\inv \Trans \TGS'
=
\TGT_{j} \reduction{\TGS}{\TGS'}
\end{equation}
and
\begin{multline}
\label{eq:red-gjgi}
\reduction{ \TGT_{j} \TGS }{ \TGT_{i}\TGS' }
= \Der(\TGS\inv \TGT_{j}\inv) \Trans \TGT_{i} \Trans \TGS'
= \Der(\TGS\inv \TGT_{j}\inv) \TGT_{i} \Der\TGT_{i} \Trans \TGS'\\
= \TGT_{i} \Der(\TGS\inv \TGT_{j}\inv) [\Der(\TGS\inv \TGT_{j}\inv), \TGT_{i}] \Der\TGT_{i} \Trans \TGS'
= \TGT_{i} \tilde \TGS,
\end{multline}
where $\tilde \TGS$ is a $\fTG[+1]$-polynomial by Theorem~\ref{thm:poly-group}.
By cheating we can rearrange the terms on the right-hand side of \eqref{eq:g-star}, obtaining
\begin{equation}
\label{eq:g-star-2}
\complexity\TGTvec^{*}
\leq
\complexity\left(
\TGT_{0}\tilde\TGSvec_{0} \uplus\dots\uplus \TGT_{j-1}\tilde\TGSvec_{j-1}
\uplus
\TGT_{j} \left( \TGSvec_{j}' \uplus \reduction{\TGS}{\TGSvec_{j}'} \right)
\right)
\end{equation}
for some $\fTG[+1]$-polynomial systems $\tilde\TGSvec_{0},\dots,\tilde\TGSvec_{j-1}$ with cardinality $2|\TGSvec_{0}|,\dots,2|\TGSvec_{j-1}|$, respectively.

We use nested induction on $j$ and $\cplx_{j}$.
In the base case $j=0$ we have $\TGTvec=\TGSvec_{0}$ and we obtain the conclusion with
\[
\cplx'(\vd,0,\cplx_{0})=\cplx_{0}.
\]
Suppose that $j>0$ and the conclusion holds for $j-1$.
If $\cplx_{j}=0$, then by cheating we may assume $\TGSvec_{j}=(1_{\TG})$.
Moreover, \eqref{eq:g-star-2} becomes
\[
\complexity\TGTvec^{*}
\leq
\complexity\left(
\TGT_{0}\tilde\TGSvec_{0} \uplus\dots\uplus \TGT_{j-1}\tilde\TGSvec_{j-1}
\right).
\]
The induction hypothesis on $j$ and Theorem~\ref{thm:finite-complexity} applied to $\tilde\TGSvec_{j-1}$ yield the conclusion with the bound
\begin{multline*}
\cplx'(\vd,j,|\TGSvec_{0}|,\dots,|\TGSvec_{j-1}|,0)
=
\cplx'(\vd,j-1,2|\TGSvec_{0}|,\dots,2|\TGSvec_{j-2}|,\cplx(\vd-1,2|\TGSvec_{j-1}|))+1.
\end{multline*}
If $\cplx_{j}>0$, then by cheating we may assume $\TGSvec_{j}\neq (1_{\TG})$ and $\complexity\TGSvec_{j}^{*} \leq \cplx_{j} - 1$, and \eqref{eq:g-star-2} becomes
\[
\complexity\TGTvec^{*}
\leq
\complexity\left(
\TGT_{0}\tilde\TGSvec_{0} \uplus\dots\uplus \TGT_{j-1}\tilde\TGSvec_{j-1}
\uplus \TGT_{j}\TGSvec_{j}^{*}
\right).
\]
The induction hypothesis on $\cplx_{j}$ now yields the conclusion with the bound
\[
\cplx'(\vd,j,|\TGSvec_{0}|,\dots,|\TGSvec_{j-1}|,\cplx_{j})
=
\cplx'(\vd,j,2|\TGSvec_{0}|,\dots,2|\TGSvec_{j-1}|,\cplx_{j}-1)+1.
\qedhere
\]
\end{proof}
\begin{proof}[Proof of Thm.~\ref{thm:finite-complexity} assuming Prop.~\ref{prop:finite-complexity} for length $\vd$]
Use Proposition~\ref{prop:finite-complexity} with system $\tilde\TGTvec$ as in the hypothesis and systems $\TGSvec_{0},\dots,\TGSvec_{j}$ being the trivial system $(1_{\TG})$.
This yields the bound
\[
\cplx(\vd,j) = \cplx'(\vd,j,1,\dots,1,0).
\qedhere
\]
\end{proof}

\section{The structure theorem}
\label{sec:structure}
The idea to prove a structure theorem for elements of a Hilbert space via the Hahn-Banach theorem is due to Gowers \cite[Proposition 3.7]{MR2669681}.
The insight of Walsh \cite[Proposition 2.3]{MR2912715} was to allow the ``structured'' and the ``pseudorandom'' part in the decomposition to take values in varying spaces that satisfy a monotonicity condition.

The assumption that these spaces are described by norms that are equivalent to the original Hilbert space norm can be removed.
In fact the structure theorem continues to hold for spaces described by extended seminorms\footnote{An \emph{extended seminorm} $\|\cdot\|$ on a vector space $H$ is a function with \emph{extended} real values $[0,+\infty]$ that is subadditive, homogeneous (i.e.\ $\|\lambda u\| = |\lambda| \|u\|$ if $\lambda\neq0$) and takes the value $0$ at $0$.}
\index{extended seminorm}
that are easier to construct in practice as we will see in Lemma~\ref{lem:Sigma-ext-seminorm}.

The Hahn-Banach theorem is used in the following form.
\begin{lemma}
\label{lem:sep}
Let $V_{i}$, $i=1,\dots,k$, be convex subsets of a Hilbert space $H$, at least one of which is open, and each of which contains $0$.
Let $V:=c_{1}V_{1}+\dots+c_{k}V_{k}$ with $c_{i}>0$ and take $f\not\in V$.
Then there exists a vector $\phi\in H$ such that $\<f,\phi\> \geq 1$ and $\<v,\phi\> < c_{i}\inv$ for every $v\in V_{i}$ and every $i$.
\end{lemma}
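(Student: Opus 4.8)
The plan is to deduce this from the standard geometric Hahn--Banach separation theorem. First I would form the set $V = c_1 V_1 + \dots + c_k V_k$ and observe that it is convex, contains $0$ (being a sum of convex sets each containing $0$), and is open: if $V_{i_0}$ is open then $c_{i_0} V_{i_0}$ is open, and adding any set to an open set yields an open set, so $V$ is open. Since $f \notin V$ and $V$ is a nonempty open convex set, the Hahn--Banach separation theorem in the Hilbert space $H$ (self-dual, so functionals are represented by vectors) gives a vector $\psi \in H$, $\psi \neq 0$, and a real number $\gamma$ with $\<v,\psi\> < \gamma \leq \<f,\psi\>$ for all $v \in V$. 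Here the strict inequality on the left uses that $V$ is open.

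Next I would pin down the constant $\gamma$. Since $0 \in V$ we have $0 < \gamma$, so we may rescale: set $\phi := \psi/\gamma$. Then $\<f,\phi\> \geq 1$ and $\<v,\phi\> < 1$ for every $v \in V$. It remains to transfer this bound from $V$ to the individual $V_i$. Given $v \in V_i$ and any index $\ell \neq i$, pick $v_\ell := 0 \in V_\ell$; then $c_i v + \sum_{\ell \neq i} c_\ell v_\ell = c_i v \in V$, so $\<c_i v, \phi\> < 1$, i.e. $\<v,\phi\> < c_i^{-1}$, as desired. This is exactly the claimed conclusion.

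The only genuinely delicate point is making sure we are entitled to the \emph{strict} inequality $\<v,\phi\> < 1$ on all of $V$ rather than merely $\leq 1$; this is where openness of $V$ is essential, and it is why the hypothesis requires at least one of the $V_i$ to be open. Everything else is bookkeeping: checking that sums of convex sets containing $0$ are again convex sets containing $0$, that a sumset with an open summand is open, and that $\gamma > 0$ because $0 \in V$. I do not expect any real obstacle; the content is entirely in correctly invoking the separating hyperplane theorem and then specializing the resulting linear estimate to each summand by setting the other coordinates to $0$.
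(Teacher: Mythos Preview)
Your proposal is correct and follows essentially the same approach as the paper: separate $V$ from $f$ by Hahn--Banach, normalize the resulting functional so that $\<v,\phi\><1$ on $V$ and $\<f,\phi\>\geq 1$, then specialize to each $V_i$ by plugging in zeros for the other summands. The paper's proof simply records this in three terse sentences (``by the assumption $V$ is open, convex \ldots; by Hahn--Banach there exists $\phi$ \ldots; the claim follows''), whereas you have carefully spelled out the openness of $V$, the normalization $\gamma>0$, and the final deduction---all of which the paper leaves implicit.
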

\begin{proof}
By the assumption the set $V$ is open, convex and does not contain $f$.
By the Hahn-Banach theorem there exists a $\phi\in H$ such that $\<f,\phi\> \geq 1$ and $\<v,\phi\> < 1$ for every $v\in V$.
The claim follows.
\end{proof}
The next result somewhat resembles Tao's structure theorem \cite{MR2274314}, though Tao's result gives additional information (positivity and boundedness of the structured part).
Information of that kind can also be extracted from the proof via the Hahn--Banach theorem using some of the more advanced techniques of Gowers \cite{MR2669681}.
\begin{structuretheorem}
\label{thm:structure}
For every $\delta>0$,
any functions $\omega,\psi\from\iset\to\iset$,
and every $M_{\bullet}\in\iset$ there exists an increasing sequence of indices
\begin{equation}
\label{eq:decomposition-seq}
M_{\bullet} \leq M_1 \leq \dots \leq M_{\lceil 2\delta^{-2} \rceil}
\end{equation}
for which the following holds.
Let $\eta \from\R_+\to\R_+$ be any function and \mbox{$(\|\cdot\|_\indx)_{\indx\in\iset}$} be a net of extended seminorms on a Hilbert space $H$ such that the net of dual extended seminorms $(\|\cdot\|_\indx^*)_{\indx\in\iset}$ decreases monotonically.
Then for every $f\in H$ with $\|f\| \leq 1$ there exists a decomposition
\begin{equation}
\label{eq:decomposition}
f = \sigma + u + v
\end{equation}
and an $1\leq i\leq \lceil 2\delta^{-2} \rceil$ such that
\begin{equation}
\|\sigma\|_\indy < C^{\delta,\eta}_i, \quad
\|u\|_\indx^* < \eta(C^{\delta,\eta}_i), \quad\text{and}\quad
\|v\| < \delta,
\end{equation}
where the indices $\indx$ and $\indy$ satisfy $\omega(\indx) \leq M_i$ and $\psi(M_i) \leq \indy$,
and where the constant $C^{\delta,\eta}_{i}$ belongs to a decreasing sequence that only depends on $\delta$ and $\eta$ and is defined inductively starting with
\begin{equation}
\label{eq:C}
C^{\delta,\eta}_{\lceil 2\delta^{-2} \rceil} = 1
\quad\text{by}\quad
C^{\delta,\eta}_{i-1} = \max \Big\{ C^{\delta,\eta}_{i}, \frac{2}{\eta(C^{\delta,\eta}_{i})} \Big\}.
\end{equation}
\end{structuretheorem}
In the sequel we will only use Theorem~\ref{thm:structure} with the identity function $\omega(\indx):=\indx$, in which case we can choose $\indx=M_i$, and with $\delta$ and $\eta$ as in \eqref{eq:eta}.
\begin{proof}
It suffices to consider functions such that $\omega(\indx) \geq \indx$ and $\psi(\indx) \geq \indx$ for all $\indx$ (in typical applications $\psi$ grows rapidly).

The sequence $(M_{i})$ and auxiliary sequences $(\indx_{i})$, $(\indy_{i})$ are defined inductively starting with $\indx_{1}:=M_{\bullet}$ by
\[
M_{i} := \omega(\indx_{i}),
\quad \indy_{i}:=\psi(M_{i}),
\quad \indx_{i+1}:=\indy_{i},
\]
so that all three sequences increase monotonically.
Let $r$ be chosen later and assume that there is no $i \in \{1,\dots,r\}$ for which a decomposition of the form \eqref{eq:decomposition} with $\indx=\indx_{i}$, $\indy=\indy_{i}$ exists.

For every $i \in \{1,\dots,r\}$ we apply Lemma~\ref{lem:sep} with $V_{1},V_{2},V_{3}$ being the open unit balls of $\|\cdot\|_{\indy_{i}}$, $\|\cdot\|_{\indx_{i}}^{*}$ and $\|\cdot\|$, respectively, and with $c_{1}=C_{i}$, $c_{2}=\eta(C_{i})$, $c_{3}=\delta$.
Note that $V_{3}$ is open in $H$.
We obtain vectors $\phi_{i} \in H$ such that
\[
\<\phi_{i},f\> \geq 1, \quad \|\phi_{i}\|_{\indy_{i}}^{*} \leq (C_{i})\inv, \quad \|\phi_{i}\|_{\indx_{i}}^{**} \leq \eta(C_{i})\inv, \quad \|\phi_{i}\| \leq \delta\inv.
\]
Take $i<j$, then $\indy_{i}\leq \indx_{j}$, and by \eqref{eq:C} we have
\begin{multline*}
|\<\phi_{i},\phi_{j}\>|
\leq \|\phi_{i}\|_{\indx_{j}}^{*} \|\phi_{j}\|_{\indx_{j}}^{**}
\leq \|\phi_{i}\|_{\indy_{i}}^{*} \|\phi_{j}\|_{\indx_{j}}^{**}\\
\leq (C_{i})\inv \eta(C_{j})\inv
\leq (C_{j-1})\inv \eta(C_{j})\inv
\leq (2 \eta(C_{j})\inv)\inv \eta(C_{j})\inv
= \frac12,
\end{multline*}
so that
\[
r^{2} \leq \<\phi_{1}+\dots+\phi_{r},f\>^{2}
\leq \|\phi_{1}+\dots+\phi_{r}\|^{2}
\leq r \delta^{-2} + \frac{r^{2}-r}{2},
\]
which is a contradiction if $r \geq 2 \delta^{-2}$.
\end{proof}

\section{Reducible and structured functions}
\label{sec:reducible}
In this section we adapt Walsh's notion of a structured function and a corresponding inverse theorem to the context of amenable groups.
Informally, a function is reducible with respect to a system if its shifts can be approximated by shifts arising from reductions of this system, uniformly over \folner{} sets that are not too large.
A function is structured if it is a linear combination of reducible functions.

\begin{definition}
\label{def:uniformly-N-reducible}
Let $\TGTvec=(\TGT_{0},\dots,\TGT_{j})$ be a system, $\gamma>0$ and $\indx \in \iset$.
A function $\sigma$ bounded by one is called \emph{uniformly $(\TGTvec, \gamma, \indx)$-reducible}
\index{uniformly reducible function}
(in symbols $\sigma\in\Sigma_{\TGTvec, \gamma, \indx}$)
if for every \folner{} set $I$ with $\varphi_{\gamma}(\lfloor I \rfloor) \leq \indx$ there exist
functions $b_0, \dots, b_{j-1}$ bounded by one and a finite measure set $J \subset \AG$ such that for every $\AGg \in I$
\begin{equation}
\label{ineq:unif-red}
\Big\| \TGT_j(\AGg) \sigma - \E_{\AGh \in J} \prod_{i=0}^{j-1} \reduction{\TGT_j}{\TGT_i}{\AGh}(\AGg) b_i \Big\|_\infty
< \gamma.
\end{equation}
\end{definition}
Walsh's definition of $L$-reducibility with parameter $\epsilon$ corresponds to uniform $(\TGTvec, \gamma, \indx)$-reducibility with $\indx=\varphi_{\gamma}(L)$ and a certain $\gamma=\gamma(\epsilon)$ that will now be defined along with other parameters used in the proof of the main result.

Given $\epsilon>0$ we fix
\begin{equation}
\label{eq:eta}
\delta = \frac{\epsilon}{2^{2}\cdot 3^{2}}
\quad \text{and}
\quad \eta(x) = \frac{\epsilon^{2}}{2^{3}\cdot 3^{3} x}
\end{equation}
and define the decreasing sequence
$C_{1}^{\delta,\eta} \geq \dots \geq C_{\lceil 2\delta^{-2} \rceil}^{\delta,\eta}$
as in \eqref{eq:C}.
It is in turn used to define the function
\begin{equation}
\label{eq:gamma}
\gamma = \gamma^{1}(\epsilon) = \frac{\epsilon}{3\cdot 8 C^{*}},
\quad\text{where}\quad C^{*} = C_{1}^{\delta,\eta},
\end{equation}
and its iterates $\gamma^{\cplx+1}(\epsilon) = \gamma^{\cplx}(\gamma)$.

The ergodic average corresponding to a system $\TGTvec = (\TGT_{0},\dots,\TGT_{j})$, bounded functions $f_{0},\dots, f_{j}$ and a finite measure set $I \subset \AG$ is denoted by
\[
\Av{I}[f_0, \dots, f_j]
:=
\E_{\AGg\in I} \prod_{i=0}^j \TGT_i(\AGg) f_i.
\]
The inverse theorem below tells that any function that gives rise to a large ergodic average correlates with a reducible function.
\begin{inversetheorem}
\label{thm:inverse}
Let $\epsilon > 0$.
Suppose that $\|u\|_\infty \leq 3C$, the functions $f_0, \dots, f_{j-1}$ are bounded by one, and $\| \Av{I}[f_0, \dots, f_{j-1}, u] \|_2 > \epsilon/6$ for some \folner{} set $I = \Fo_\indx \AGr$.
Then there exists a uniformly $(\TGTvec, \gamma, \indx)$-reducible function $\sigma$ such that $\<u,\sigma\> > 2\eta(C)$.
\end{inversetheorem}
\begin{proof}
Set $b_0 := \Av{I}[f_0, \dots, f_{j-1}, u] f_{0} / \|u\|_\infty$, so that $\|b_0\|_\infty \leq 1$, and $b_{1}:=f_{1},\dots,b_{j-1}:=f_{j-1}$.
Recall $\TGT_{0}\equiv 1_{\TG}$ and note that
\begin{align*}
2\eta(C)
&<
\|u\|_\infty\inv \left\| \Av{I} [f_0, \dots, f_{j-1}, u] \right\|_2^2\\
&=
\< \E_{\AGg\in I} \prod_{i=0}^{j-1} \TGT_i(\AGg) f_i \cdot \TGT_j(\AGg) u, \frac{\Av{I}[f_0, \dots, f_{j-1}, u]}{\|u\|_\infty} \>\\
&=
\E_{\AGg\in I} \< \TGT_j(\AGg) u, \prod_{i=0}^{j-1} \TGT_i(\AGg) b_i \>\\
&=
\< u, \underbrace{\E_{\AGh\in \Fo_{\indx}} \prod_{i=0}^{j-1} \TGT_j(\AGh\AGr)\inv \TGT_i(\AGh\AGr) b_i}_{=: \sigma} \>.
\end{align*}
We claim that $\sigma$ is uniformly $(\TGTvec, \gamma, \indx)$-reducible.

Consider a \folner{} set $I = \Fo_L \tilde \AGr$ with $\varphi_{\gamma}(L) \leq \indx$.
We have to show \eqref{ineq:unif-red} for some $J\subset\AG$ and every element of $I$.
Let $\AGg\in \Fo_L$.
By definition \eqref{eq:varphi} of $\varphi_{\gamma}$ we obtain
\[
\Big\| \sigma - \E_{\AGh\in \Fo_\indx} \prod_{i=0}^{j-1} \TGT_j(\AGg\AGh\AGr)\inv \TGT_i(\AGg\AGh\AGr) b_i \Big\|_\infty
\leq \frac{\meas{\AGg \Fo_\indx \Delta \Fo_\indx}}{\meas{\Fo_\indx}} < \gamma.
\]
Since $\TGT_j(\AGg\tilde \AGr)$ is an isometric algebra homomorphism, we see that $\TGT_j(\AGg\tilde \AGr) \sigma$ is uniformly approximated by
\[
\E_{\AGh\in \Fo_\indx} \prod_{i=0}^{j-1} \TGT_j(\AGg\tilde \AGr) \TGT_j(\AGg\AGh\AGr)\inv \TGT_i(\AGg\AGh\AGr) b_i.
\]
Splitting $\AGg\AGh\AGr = \AGg\tilde \AGr \cdot \tilde \AGr\inv \AGh\AGr$ we can write this function as
\[
\E_{\AGh\in \Fo_\indx} \prod_{i=0}^{j-1} \reduction{\TGT_j}{\TGT_i}{\tilde \AGr\inv \AGh\AGr}(\AGg\tilde \AGr) b_i
=
\E_{\AGh\in \tilde \AGr\inv \Fo_\indx \AGr} \prod_{i=0}^{j-1} \reduction{\TGT_j}{\TGT_i}{\AGh}(\AGg\tilde \AGr) b_i,
\]
which gives \eqref{ineq:unif-red} with $J=\tilde \AGr\inv \Fo_\indx \AGr$.
\end{proof}
Structure will be measured by extended seminorms associated to sets $\Sigma$ of reducible functions by the following easy lemma.
\begin{lemma}[cf. {\cite[Corollary 3.5]{MR2669681}}]
\label{lem:Sigma-ext-seminorm}
Let $H$ be an inner product space and $\Sigma\subset H$.
Then the formula
\begin{equation}
\|f\|_{\Sigma} := \inf\Big\{ \sum_{t=0}^{k-1}|\lambda_{t}| : f=\sum_{t=0}^{k-1}\lambda_{t}\sigma_{t}, \sigma_{t} \in \Sigma \Big\},
\end{equation}
where empty sums are allowed and the infimum of an empty set is by convention $+\infty$, defines an extended seminorm on $H$ whose dual extended seminorm is given by
\begin{equation}
\|f\|_{\Sigma}^{*} := \sup_{\phi\in H: \|\phi\|_{\Sigma}\leq 1} |\<f,\phi\>| = \sup_{\sigma\in\Sigma}|\<f,\sigma\>|.
\end{equation}
\end{lemma}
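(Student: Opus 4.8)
The plan is to check directly from the definition that $\|\cdot\|_{\Sigma}$ satisfies the three axioms of an extended seminorm, and then to establish the dual formula by two opposite inequalities.

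For the seminorm axioms: the empty sum is an admissible representation of $0$ with coefficient sum $0$, so $\|0\|_{\Sigma}=0$. For homogeneity with $\lambda\neq 0$, multiplying a representation $f=\sum_{t}\lambda_{t}\sigma_{t}$ through by $\lambda$ gives a representation $\lambda f=\sum_{t}(\lambda\lambda_{t})\sigma_{t}$ with coefficient sum $|\lambda|\sum_{t}|\lambda_{t}|$, and this is a bijection between the representations of $f$ and of $\lambda f$; taking infima yields $\|\lambda f\|_{\Sigma}=|\lambda|\,\|f\|_{\Sigma}$, the convention $|\lambda|\cdot(+\infty)=+\infty$ covering the case that $f$ has no finite representation. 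For subadditivity, concatenating a representation of $f$ with one of $g$ produces a representation of $f+g$ whose coefficient sum is the sum of the two, so $\|f+g\|_{\Sigma}\leq\|f\|_{\Sigma}+\|g\|_{\Sigma}$ upon passing to the infimum over both families (and trivially if either term is $+\infty$).

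For the dual formula, set $S:=\sup_{\sigma\in\Sigma}|\<f,\sigma\>|$, with the convention $\sup\emptyset=0$. Since every $\sigma\in\Sigma$ has the one-term representation $\sigma=1\cdot\sigma$, we get $\|\sigma\|_{\Sigma}\leq 1$, hence $\sigma$ is among the competitors in the supremum defining $\|f\|_{\Sigma}^{*}$ and $|\<f,\sigma\>|\leq\|f\|_{\Sigma}^{*}$; taking the supremum over $\sigma$ gives $S\leq\|f\|_{\Sigma}^{*}$. Conversely, the inequality $\|f\|_{\Sigma}^{*}\leq S$ is vacuous when $S=+\infty$, so assume $S<+\infty$; for any $\phi$ with $\|\phi\|_{\Sigma}\leq 1$ and any $\epsilon>0$ choose a representation $\phi=\sum_{t}\lambda_{t}\sigma_{t}$ with $\sum_{t}|\lambda_{t}|<1+\epsilon$, so that $|\<f,\phi\>|\leq\sum_{t}|\lambda_{t}|\,|\<f,\sigma_{t}\>|\leq(1+\epsilon)S$, and let $\epsilon\to 0$; taking the supremum over such $\phi$ gives $\|f\|_{\Sigma}^{*}\leq S$.

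I do not anticipate a real obstacle in this argument: it is entirely formal. The only points needing a little attention are the bookkeeping with values in $[0,+\infty]$ — that an empty sum represents $0$, that the infimum over an empty set of representations is $+\infty$, and (when it arises) that $\sup\emptyset$ should be read as $0$ — so that the degenerate case $\Sigma=\emptyset$, in which $\|\cdot\|_{\Sigma}$ is $+\infty$ off the origin and its dual is identically $0$, comes out consistently.
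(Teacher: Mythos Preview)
Your argument is correct and is exactly the routine verification one would expect; the paper itself omits the proof, treating the lemma as an easy observation (with a reference to Gowers). Your handling of the edge cases ($\Sigma=\emptyset$, infinite values) is careful and consistent with the paper's footnote on extended seminorms.
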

Heuristically, a function with small dual seminorm is pseudorandom since it does not correlate much with structured functions.

\section{Metastability of averages for finite complexity systems}
\label{sec:metastability}
We come to the proof of the norm convergence result.
Walsh's approach to this problem involves a quantitative statement that is stronger than Theorem~\ref{thm:norm-convergence} in the same way as the quantitative von Neumann Theorem~\ref{thm:vn-quan} is stronger than the finitary von Neumann Theorem~\ref{thm:vn-fin}.
We use the notation
\[
\Av{I,I'}[f_0, \dots, f_j]
:=
\Av{I}[f_0, \dots, f_j] - \Av{I'}[f_0, \dots, f_j].
\]
for the difference between two multiple averages.
\begin{theorem}
\label{thm:metastability}
For every complexity $\cplx\in\N$ and every $\epsilon>0$ there exists $K_{\cplx,\epsilon}\in\N$ such that
for every function $F \from \iset \to \iset$ and every $M \in \iset$ there exists a tuple of indices
\begin{equation}
\label{eq:main-thm-seq}
M\leq M^{\cplx,\epsilon,F}_1 , \dots , M^{\cplx,\epsilon,F}_{K_{\cplx,\epsilon}} \in \iset
\end{equation}
of size $K_{\cplx,\epsilon}$
such that for every system $\TGTvec$ with complexity at most $\cplx$ and every choice of functions $f_0, \dots, f_j \in L^{\infty}(X)$ bounded by one there exists $1 \leq i \leq K_{\cplx,\epsilon}$ such that
for all \folner{} sets $I,I'$ with $M^{\cplx,\epsilon,F}_i \leq \lfloor I\rfloor,\lfloor I'\rfloor$ and $\lceil I, I'\rceil_{\gamma^{\cplx}(\epsilon)} \leq F(M^{\cplx,\epsilon,F}_i)$ we have
\begin{equation}
\label{eq:main-thm-est}
\| \Av{I,I'}[f_0, \dots, f_j] \|_2 < \epsilon.
\end{equation}
\end{theorem}
Recall that $\lceil I,I' \rceil_{\gamma^{\cplx}(\epsilon)}$ was defined in Lemma~\ref{lem:ceil}.
Theorem~\ref{thm:metastability} will be proved by induction on the complexity $\cplx$.
As an intermediate step we need the following.
\begin{proposition}
\label{prop:metastability}
For every complexity $\cplx\in\N$ and every $\epsilon>0$ there exists $\tilde K_{\cplx,\epsilon}\in\N$ such that
for every function $F \from \iset \to \iset$ and every $\tilde M \in \iset$ there exists a tuple of indices
\begin{equation}
\label{eq:main-prop-seq}
\tilde M \leq \tilde M^{\cplx,\epsilon,F}_1 , \dots , \tilde M^{\cplx,\epsilon,F}_{\tilde K_{\cplx,\epsilon}} \in \iset
\end{equation}
of size $\tilde K_{\cplx,\epsilon}$
as well as an index $\indx = \indx_{\cplx,\epsilon,F}(\tilde M)$ such that the following holds.
For every system $\TGTvec$ such that every reduction $\TGTvec^*_{\AGr}$ ($\AGr\in\AG$) has complexity at most $\cplx$,
every choice of functions $f_0, \dots, f_{j-1} \in L^{\infty}(X)$ bounded by one,
and every finite linear combination $\sum_{t} \lambda_t \sigma_t$ of uniformly $(\TGTvec, \gamma, \indx)$-reducible functions $\sigma_{t}$
there exists $1 \leq \tilde i \leq \tilde K_{\cplx,\epsilon}$ such that
for all \folner{} sets $I,I'$ with $\tilde M^{\cplx,\epsilon,F}_{\tilde i} \leq \lfloor I\rfloor,\lfloor I'\rfloor$ and $\lceil I, I'\rceil_{\gamma^{\cplx+1}(\epsilon)} \leq F(\tilde M^{\cplx,\epsilon,F}_{\tilde i})$ we have
\begin{equation}
\Big\| \Av{I,I'}[f_0, \dots, f_{j-1},\sum_{T} \lambda_t \sigma_t] \Big\|_2 < 8 \gamma \sum_t |\lambda_t|.
\end{equation}
\end{proposition}
The induction procedure is as follows.
Theorem~\ref{thm:metastability} for complexity $\cplx$ is used to deduce Proposition~\ref{prop:metastability} for complexity $\cplx$, which is in turn used to show Theorem~\ref{thm:metastability} for complexity $\cplx+1$.
The base case (Theorem~\ref{thm:metastability} with $\cplx=0$) is trivial, take $K_{0,\epsilon}=1$ and $M_{1}^{0,\epsilon,F}=M$.

In order to keep an overview we note that in the proofs below elements of $\AG$ are denoted by $\AGl,\AGr,\AGg,\AGh$, indices (elements of $\iset$) by $\indx,\indy,M$, real numbers by $\epsilon,\delta,C$, integers by $i,j,k,t,K,\cplx$, and real-valued functions on $X$ by $f,\sigma,u,v$.

\begin{proof}[Proof of Prop.~\ref{prop:metastability} assuming Thm.~\ref{thm:metastability} for complexity $\cplx$]
The tuple \eqref{eq:main-prop-seq} and the index $\indx$ will be chosen later.
For the moment assume that $I, I' \lesssim_{\gamma} I_{0}$ for some \folner{} set $I_{0}$ with $\varphi_{\gamma}(\lfloor I_{0}\rfloor) \leq \indx$.
Consider the functions $b_{0}^{t},\dots,b_{j-1}^{t}$ bounded by one and the sets $J^{t} \subset \AG$ from the definition of uniform $(\TGTvec, \gamma, \indx)$-reducibility of $\sigma_{t}$ over $I_0$ (Definition~\ref{def:uniformly-N-reducible}).
Write $O(x)$ for an error term bounded by $x$ in $L^\infty(X)$.
By \eqref{ineq:unif-red} we have
\begin{multline*}
\Av{I}[f_0, \dots, f_{j-1}, \sigma_t]
= \meas{I}\inv \int_{\AGg\in I} \prod_{i=0}^{j-1} \TGT_i(\AGg) f_i \cdot \TGT_j(\AGg)\sigma_t\\
= \meas{I}\inv \int_{\AGg\in I\cap I_0} \prod_{i=0}^{j-1} \TGT_i(\AGg) f_i
\left( \E_{\AGr\in J^t} \prod_{i=0}^{j-1} \reduction{\TGT_j}{\TGT_i}{\AGr}(\AGg) b_i^t + O(\gamma) \right)
+ \frac{\meas{I\setminus I_0}}{\meas{I}} O(1).
\end{multline*}
The first error term accounts for the $L^{\infty}$ error in the definition of uniform reducibility
and the second for the fraction of $I$ that is not contained in $I_{0}$.
This can in turn be approximated by
\begin{multline*}
\meas{I}\inv \int_{\AGg\in I} \E_{\AGr\in J^t} \prod_{i=0}^{j-1} \TGT_i(\AGg) f_i
\prod_{i=0}^{j-1} \reduction{\TGT_j}{\TGT_i}{\AGr}(\AGg) b_i^t
+ \frac{\meas{I\cap I_0}}{\meas{I}} O(\gamma)
+ \frac{\meas{I\setminus I_0}}{\meas{I}} O(2)\\
= \E_{\AGr\in J^{t}} \Av[\TGTvec^{*}_{\AGr}]{I}[f_0, \dots, f_{j-1}, b_0^t, \dots, b_{j-1}^t] + O(3\gamma).
\end{multline*}
Using the analogous approximation for $I'$ and summing over $t$ we obtain
\begin{multline}
\label{eq:est-prop}
\| \Av{I,I'}[f_0, \dots, f_{j-1},\sum_{t} \lambda_t \sigma_t] \|_{2}\\
\leq
\sum_{t} |\lambda_{t}| \E_{\AGr\in J^{t}} \| \Av[\TGTvec^{*}_{\AGr}]{I,I'}[f_0, \dots, f_{j-1},b_{0}^{t},\dots,b_{j-1}^{t}] \|_{2}
+6 \gamma \sum_{t} |\lambda_{t}|.
\end{multline}
If $\TG$ is commutative and $\TGTvec$ consists of affine mappings, then the maps that constitute systems $\TGTvec^{*}_{\AGr}$ differ at most by constants, and in this case one can bound the first summand by a norm of a difference of averages associated to certain functions on $X \times \uplus_{t} J^{t}$ similarly to the reduction in \cite[\textsection 5]{MR2408398}.
In general we need (a version of) the more sophisticated argument of Walsh that crucially utilizes the uniformity in the induction hypothesis.
The argument provides a bound on most (with respect to the weights $|\lambda_{t}|/\meas{J^{t}}$) of the norms that occur in the first summand.

Let $r = r(\cplx,\epsilon)$ be chosen later.
We use the operation $M \mapsto M^{\cplx,\gamma,F_{s}}_i$ and the constant $K=K_{\cplx,\gamma}$ from Theorem~\ref{thm:metastability} (with $\gamma$ in place of $\epsilon$) to inductively define functions $F_{r},\dots,F_{1} \from\iset\to\iset$ by
\[
F_{r}=F, \quad F_{s-1}(M) := \sup_{1 \leq i \leq K} F_{s}(M^{\cplx,\gamma,F_{s}}_i).
\]
This depends on a choice of a supremum function for the directed set $\iset$ that can be made independently of all constructions performed here.
Using the same notation define inductively for $1\leq i_1,\dots,i_r \leq K$ the indices
\[
\tilde M^{()} := \tilde M, \quad
\tilde M^{(i_{1},\dots,i_{s-1},i_{s})} :=
(\tilde M^{(i_{1},\dots,i_{s-1})})^{\cplx,\gamma,F_{s}}_{i_{s}}.
\]
The theorem tells that for every $t$, $\AGr$ and $1 \leq i_{1},\dots,i_{s-1} \leq K$
there exists some $1 \leq i_{s} \leq K$ such that
\begin{equation}
\label{eq:est-red}
\| \Av[\TGTvec^{*}_{\AGr}]{I,I'}[f_0, \dots, f_{j-1},b_{0}^{t},\dots,b_{j-1}^{t}] \|_{2} <\gamma
\end{equation}
holds provided
\begin{equation}
\label{eq:main-ind-cond}
\tilde M^{(i_{1},\dots,i_{s})} = (\tilde M^{(i_{1},\dots,i_{s-1})})^{\cplx,\gamma,F_{s}}_{i_{s}} \leq \lfloor I\rfloor, \lfloor I'\rfloor
\text{ and }
\lceil I, I'\rceil_{\gamma^{\cplx}(\gamma)} \leq F_{s}(\tilde M^{(i_{1},\dots,i_{s})}).
\end{equation}

Start with $s=1$.
By the pigeonhole principle there exists an $i_1$ such that \eqref{eq:est-red} holds for at least the fraction $1/K$ of the pairs $(t,\AGr)$ with respect to the weights $|\lambda_{t}|/\meas{J^{t}}$ (provided \eqref{eq:main-ind-cond} with $s=1$).

Using the pigeonhole principle repeatedly on the remaining pairs $(t,\AGr)$ with weights $|\lambda_{t}|/\meas{J^{t}}$ we can find a sequence $i_{1},\dots, i_{r}$ such that for all pairs but the fraction $(\frac{K-1}{K})^{r}$ the estimate \eqref{eq:est-red} holds provided that the conditions \eqref{eq:main-ind-cond} are satisfied for all $s$.

By definition we have
$\tilde M \leq \tilde M^{(i_{1})} \leq \tilde M^{(i_{1},i_{2})} \leq \dots \leq \tilde M^{(i_{1},\dots,i_{r})}$
and
\begin{multline*}
F_{1}(\tilde M^{(i_{1})})
\geq F_{2}((\tilde M^{(i_{1})})^{\cplx,\gamma,F_{2}}_{i_{2}})
= F_{2}(\tilde M^{(i_{1},i_{2})})
\geq \dots\\
\geq F_{r}(\tilde M^{(i_{1},\dots,i_{r})})
= F(\tilde M^{(i_{1},\dots,i_{r})})
\end{multline*}
for any choice of $i_{1},\dots,i_{r}$.
Therefore the conditions \eqref{eq:main-ind-cond} become stronger as $s$ increases.
Recall from \eqref{eq:gamma} that $\gamma^{\cplx}(\gamma) = \gamma^{\cplx+1}(\epsilon)$, thus we only need to ensure
\begin{equation}
\label{eq:main-ind-cond2}
\tilde M^{(i_{1},\dots,i_{r})} \leq \lfloor I\rfloor,\lfloor I'\rfloor
\text{ and }
\lceil I, I'\rceil_{\gamma^{\cplx+1}(\epsilon)} \leq F(\tilde M^{(i_{1},\dots,i_{r})}).
\end{equation}
This is given by the hypothesis if we define the tuple \eqref{eq:main-prop-seq} to consist of all numbers $\tilde M^{(i_{1},\dots,i_{r})}$ where $i_{1},\dots,i_{r} \in \{1,\dots,K\}$, so $\tilde K_{\cplx,\epsilon} = (K_{\cplx,\gamma})^{r}$.

We now choose $r$ to be large enough that $(\frac{K-1}{K})^{r} < \gamma$.
Then the sum at the right-hand side of \eqref{eq:est-prop} splits into a main term that can be estimated by $\gamma\sum_t |\lambda_t|$ using \eqref{eq:est-red} and an error term that can also be estimated by $\gamma\sum_t |\lambda_t|$ using the trivial bound
\[
\| \Av[\TGTvec^{*}_{\AGr}]{I,I'}[f_0, \dots, f_{j-1},b_{0}^{t},\dots,b_{j-1}^{t}] \|_{2} \leq 1.
\]
Finally, the second condition in \eqref{eq:main-ind-cond2} by definition means that there exists a \folner{} set $I_0$ such that $\lfloor I_0 \rfloor = F(\tilde M^{(i_{1},\dots,i_{r})})$ and $I,I' \lesssim_{\gamma^{\cplx+1}(\epsilon)} I_0$. In particular we have $I,I' \lesssim_{\gamma} I_0$ since $\gamma^{\cplx+1}(\epsilon) \leq \gamma^{1}(\epsilon) = \gamma$.
Taking
\[
\indx := \sup_{1\leq i_{1},\dots,i_{r} \leq K} \varphi_{\gamma}(F(\tilde M^{(i_{1},\dots,i_{r})}))
\]
guarantees $\varphi_{\gamma}(\lfloor I_0 \rfloor) \leq \indx$.
\end{proof}

\begin{proof}[Proof of Thm.~\ref{thm:metastability} assuming Prop.~\ref{prop:metastability} for complexity $\cplx-1$]
Let $\cplx$, $\epsilon$, $F$ and a system $\TGTvec$ with complexity at most $\cplx$ be given.
By cheating we may assume that every reduction $\TGTvec^*_{\AGr}$ ($\AGr\in\AG$) has complexity at most $\cplx-1$.

We apply the Structure Theorem~\ref{thm:structure} with the following data.
The extended seminorms $\|\cdot\|_{\indx} := \|\cdot\|_{\Sigma_{\TGTvec, \gamma, \indx}}$, $\indx\in\iset$, are given by Lemma~\ref{lem:Sigma-ext-seminorm}; the dual extended seminorms $\|\cdot\|_{\indx}^{*} = \|\cdot\|_{\Sigma_{\TGTvec, \gamma, \indx}}^{*}$ decrease monotonically since $\Sigma_{\TGTvec, \gamma, \indx'} \subset \Sigma_{\TGTvec, \gamma, \indx}$ whenever $\indx' \geq \indx$.
The function $\psi(\tilde M) := \indx_{\cplx,\epsilon,F}(\tilde M)$ is given by Proposition~\ref{prop:metastability} with $\cplx$, $\epsilon$, $F$ as in the hypothesis of this theorem.
Finally, $\omega(\indx):=\indx$ and $M_{\bullet}:=M$.
The structure theorem provides a decomposition
\begin{equation}
\label{eq:dec-fj}
f_{j} = \sum_{t}\lambda_{t}\sigma_{t} + u + v,
\end{equation}
where $\sum_{t}|\lambda_{t}| < C_{i}^{\delta,\eta} =: C_{i} \leq C^{*}$, $\sigma_{t} \in \Sigma_{\TGTvec, \gamma, B}$, $\|u\|_{M_i}^{*} < \eta(C_{i})$ and $\|v\|_{2} < \delta$.
Here $\psi(M_{i}) \leq B$, and the index $M_{i} \geq M_{\bullet}=M$ comes from the sequence \eqref{eq:decomposition-seq} that depends only on $\psi$, $M$ and $\epsilon$, and whose length $\lceil 2\delta^{-2} \rceil$ depends only on $\epsilon$.
Note that $\psi$ in turn depends only on $\cplx$, $\epsilon$ and $F$.

We will need an $L^{\infty}$ bound on $u$ in order to use the Inverse Theorem~\ref{thm:inverse}.
To this end let $S = \{ |v| \leq C_{i}\} \subset X$, then
\[
|u| 1_{S} \leq 1_{S} + \sum_{t}|\lambda_{t}|1_{S} + |v| 1_{S} \leq 3 C_{i}.
\]
Moreover, the restriction of $u$ to $S^{\complement}$ is bounded by
\[
|u| 1_{S^{\complement}} \leq 1_{S^{\complement}} + \sum_{t}|\lambda_{t}| 1_{S^{\complement}} +  |v| 1_{S^{\complement}} \leq 3 |v| 1_{S^{\complement}},
\]
so it can be absorbed in the error term $v$.
It remains to check that $\|u1_{S}\|_{M_i}^{*}$ is small.
By Chebyshev's inequality we have $C_{i}^{2} \mu(S^{\complement}) \leq \|v\|_{2}^{2}$, so that $\mu(S^{\complement})^{1/2} \leq \delta/C_{i}$.
Let now $\sigma \in \Sigma_{M_i}$ be arbitrary and estimate
\begin{multline*}
|\<u1_{S},\sigma\>|
\leq
|\<u,\sigma\>| + |\<u1_{S^{\complement}},\sigma\>|
\leq
\|u\|_{M_i}^{*} + \|u1_{S^{\complement}}\|_{2}\|\sigma 1_{S^{\complement}}\|_{2}\\
<
\eta(C_{i}) + 3 \|v\|_{2} \mu(S^{\complement})^{1/2}
\leq
\eta(C_{i}) + 3 \delta \cdot \delta/C_{i}
< 2\eta(C_{i}).
\end{multline*}
Thus (replacing $u$ by $u 1_{S}$ and $v$ by $v+u1_{S^{\complement}}$ if necessary) we may assume $\|u\|_{\infty} \leq 3 C_{i}$
at the cost of having only $\|u\|_{M_i}^{*} < 2\eta(C_{i})$ and $\|v\|_{2} \leq 4\delta < \epsilon/6$.

Now we estimate the contributions of the individual summands in \eqref{eq:dec-fj} to \eqref{eq:main-thm-est}.
The bounds
\[
\| \Av{I}[f_0, \dots, f_{j-1}, v] \|_2 \leq \frac\epsilon6
\quad\text{and}\quad
\| \Av{I'}[f_0, \dots, f_{j-1}, v] \|_2 \leq \frac\epsilon6
\]
are immediate.
Proposition~\ref{prop:metastability} for complexity $\cplx-1$ with $\tilde M = M_{i}$ (applicable since the functions $\sigma_{t}$ are uniformly $(\TGTvec, \gamma, \psi(M_{i}))$-reducible) shows that
\[
\Big\| \Av{I,I'}[f_0, \dots, f_{j-1}, \sum_{t}\lambda_{t}\sigma_{t}] \Big\|_2
< 8 \gamma \sum_t |\lambda_t| < \frac\epsilon3,
\]
provided that the \folner{} sets $I,I'$ satisfy
\[
\tilde M^{\cplx-1,\epsilon,F}_{\tilde i} \leq \lfloor I\rfloor,\lfloor I'\rfloor
\text{ and }
\lceil I, I'\rceil_{\gamma^{\cplx}(\epsilon)} \leq F(\tilde M^{\cplx-1,\epsilon,F}_{\tilde i})
\]
for some $\tilde M^{\cplx-1,\epsilon,F}_{\tilde i}$ that belongs to the tuple \eqref{eq:main-prop-seq} given by the same proposition.
The former condition implies in particular $M_{i} \leq \tilde M^{\cplx-1,\epsilon,F}_{\tilde i} \leq \lfloor I\rfloor, \lfloor I'\rfloor$,
and in this case the Inverse Theorem~\ref{thm:inverse} shows that
\[
\| \Av{I}[f_0, \dots, f_{j-1}, u] \|_2 \leq \frac\epsilon6
\quad\text{and}\quad
\| \Av{I'}[f_0, \dots, f_{j-1}, u] \|_2 \leq \frac\epsilon6,
\]
since otherwise there exists a uniformly $(\TGTvec, \gamma, M_i)$-reducible function $\sigma$ such that $\<u,\sigma\> > 2\eta(C_i)$.

We obtain the conclusion of the theorem with the tuple \eqref{eq:main-thm-seq} being the concatenation of the tuples \eqref{eq:main-prop-seq} provided by Proposition~\ref{prop:metastability} with $\tilde M=M_{i} \geq M$ for $1 \leq i \leq \lceil 2\delta^{-2} \rceil$.
In particular, $K_{\epsilon,\cplx} = \lceil 2\delta^{-2} \rceil \tilde K_{\epsilon,\cplx-1}$.
\end{proof}
This completes the induction and thus the proof of Proposition~\ref{prop:metastability} and Theorem~\ref{thm:metastability}.
The proof of the fact that metastability implies convergence has been already outlined in the discussion of the von Neumann mean ergodic theorem.
For completeness we repeat the full argument.
\begin{proof}[Proof of Theorem~\ref{thm:norm-convergence}]
In the case \eqref{thm:norm-convergence:polynomial} we apply Theorem~\ref{thm:finite-complexity} and in the case \eqref{thm:norm-convergence:commuting} Proposition~\ref{prop:comm-antihom}.
In both cases we obtain that the complexity $\cplx:=\complexity\TGTvec$ is finite.
We may assume that the functions $f_0,\dots,f_j$ are bounded by one.

Assume that the functions $\Av{I}[f_0, \dots, f_j]$ do not converge in $L^{2}(X)$ along $\lfloor I \rfloor\in\iset$.
Then there exists an $\epsilon>0$ such that for every $M\in\iset$ there exist \folner{} sets $I,I'$ such that $M \leq \lfloor I \rfloor, \lfloor I' \rfloor$ and
\[
\|\Av{I}[f_0, \dots, f_j] - \Av{I'}[f_0, \dots, f_j]\|_{2} = \| \Av{I,I'}[f_0, \dots, f_j] \|_2 > \epsilon.
\]
This contradicts Theorem~\ref{thm:metastability} with $F(M) := \lceil I, I' \rceil_{\gamma^{\cplx}(\epsilon)}$.
Therefore the limit
\[
\lim_{\lfloor I \rfloor \in\iset} \Av{I}[f_0, \dots, f_j]
\]
exists.
Since the \folner{} net was arbitrary, Lemma~\ref{lem:Clim-indep} implies that the limit does not depend on it.
\end{proof}
\printbibliography

\end{document}